\newcommand{\aside}[1]{\marginnote{\scriptsize{#1}}[0cm]}
\newcommand{\aaside}[2]{\marginnote{\scriptsize{#1}}[#2]}
\newcommand\Emph[1]{\emph{#1}\aside{#1}}
\newcommand\EmphE[2]{\emph{#1}\aaside{#1}{#2}}
\newcommand\DeltaG{\Delta}
\newcommand\LL{\mathcal{L}}
\newcommand\CC{\mathcal{C}}
\newtheorem{lem}{Lemma}
\newtheorem{lemi}{Lemma}
\newtheorem{cor}[lem]{Corollary}
\newtheorem{thm}[lem]{Theorem}
\theoremstyle{definition}
\newtheorem{defn}[lem]{Definition}
\newtheorem{clm}{Claim}
\newtheorem{mainthm}{Main Theorem}
\newtheorem{struct-lem}{Structural Lemma}
\newtheorem{struct-lemi}{Structural Lemma}
\title{Kempe Equivalent List Edge-Colorings of Planar Graphs}
\def\vph{\varphi}
\newcommand\Z{\mathbb{Z}}
\def\dbox{\!\!\!\!\!\qed\,}
\newenvironment{clmproof}[1]{\par\noindent\underline{Proof.}\space#1}{\leavevmode\unskip\penalty9999\hbox{}\nobreak\hfill\quad\hbox{$\diamondsuit$}\smallskip}
\begin{document}
\author{Daniel W. Cranston\thanks{Department of Computer Science, Virginia
Commonwealth University, Richmond, VA, USA; \texttt{dcranston@vcu.edu}}}
\maketitle
~\vspace{-.5in}

\begin{center}
\textit{\small{Dedicated to the memory of Landon Rabern.}}
\end{center}

\begin{abstract}
For a list assignment $L$ and an $L$-coloring $\vph$, a Kempe swap in $\vph$ is
\emph{$L$-valid} if it yields another $L$-coloring.  Two $L$-colorings are
\emph{$L$-equivalent} if we can form one from another by a sequence of $L$-valid Kempe
swaps.  And a graph $G$ is \emph{$L$-swappable} if every two of its
$L$-colorings are $L$-equivalent.  We consider $L$-swappability of line graphs
of planar graphs with large maximum degree.  
Let $G$ be a planar graph with $\Delta(G)\ge 9$ and let $H$ be the line graph
of $G$. If $L$ is a $(\Delta(G)+1)$-assignment to $H$, then $H$ is $L$-swappable. 
Let $G$ be a planar graph with $\Delta(G)\ge 15$ and let $H$ be the line graph
of $G$. If $L$ is a $\Delta(G)$-assignment to $H$, then $H$ is $L$-swappable. 
The first result is analogous to one for $L$-choosability by Borodin, which was
later strengthened by Bonamy.
The second result is analogous to another for $L$-choosability by Borodin, which was
later strengthened by Borodin, Kostochka, and Woodall.
\end{abstract}

\section{Introduction and Main Results}

For a graph $G$ with a proper $k$-coloring $\vph$ and distinct
$i,j\in\{1,\ldots,k\}$,
an \emph{$(i,j)$-Kempe swap} interchanges the colors on some component of the
subgraph induced by colors $i$ and $j$.  Performing a Kempe swap on $\vph$
yields another proper $k$-coloring of $G$.  Kempe swaps were introduced in the
late 1800s in an attempt to prove the 4 Color Theorem.  This inital effort
failed, but Heawood largely salvaged the idea, and used it to prove the 5 Color
Theorem.  Kempe swaps did play an important role in the eventual proof of
the 4 Color Theorem nearly a century later, and they have also been a central tool
in most work on edge-coloring.  Two $k$-colorings are (Kempe)
\emph{$k$-equivalent} if we can transform one into the other by a sequence of Kempe
swaps, never using more than $k$ colors.  Mohar conjectured that if $G$ is
$k$-regular, then all of its $k$-colorings are $k$-equivalent.  This was proved
for $k=3$ by Feghali, Johnson, and Paulusma~\cite{FJP} (with a single
exception, the Cartesian product $K_3\dbox K_2$), and for $k\ge 4$ 
by Bonamy, Bousquet, Feghali, and Johnson~\cite{BBFJ}.

Here we study edge-coloring; equivalently, this is coloring the line graph.  
Vizing proved that $\chi'\le \Delta+1$
for every graph (where $\chi'$ and $\Delta$ denote edge-chromatic number and
maximum degree, respectively).  Mohar~\cite{mohar} extracted from Vizing's proof an algorithm
showing that all $k$-edge-colorings of a graph are $k$-equivalent when
$k\ge\chi'+2$.  He also asked whether the same result holds when $k=\chi'+1$.  This problem
was investigated by McDonald, Mohar, and Scheide~\cite{MMS} for graphs with
$\Delta\le 4$ and by Asratian and Casselgren~\cite{AC} for graphs with
$\Delta\le 5$.
One consequence of Main Theorem~\ref{main-thm1} is that we answer Mohar's question 
affirmatively for all planar graphs with $\Delta\ge 9$.
One consequence of Main Theorem~\ref{main-thm2} is that, for all planar graphs with
$\Delta\ge 15$, we get the stronger statement that all $k$-edge-colorings are
$k$-equivalent whenever $k\ge \chi'$.

A \emph{list assignment} $L$ for a graph $G$ gives each vertex $v\in V(G)$ a set of
(allowable) colors $L(v)$, and an \emph{$L$-coloring} is a proper coloring
$\vph$ such that $\vph(v)\in L(v)$ for all vertices $v$.  If $|L(v)|=k$ for all
$v\in V(G)$, and some constant $k$, then $L$ is a \emph{$k$-assignment}.
Historically, Kempe swaps have not
been studied for list coloring, since we encounter the obvious concern that a
Kempe swap in a proper $L$-coloring of $G$ might not yield another proper
$L$-coloring of $G$.  (Consider a single edge $vw$ with $L(v)=\{1,2\}$,
$L(w)=\{1,3\}$, $\vph(v)=1$, $\vph(w)=3$, and a 1,3-swap at $w$.)  However,
we can easily sidestep this obstacle.  A Kempe swap is \emph{valid} for a graph
$G$, list assignment $L$, and $L$-coloring $\vph$ if performing that Kempe swap
in $\vph$ yields another proper $L$-coloring of $G$.  For short, we say the swap
is \Emph{$L$-valid} for $\vph$.
Two $L$-colorings are now \Emph{$L$-equivalent} if one can be transformed to the
other by a sequence of $L$-valid Kempe swaps, and $G$ is
\EmphE{$L$-swappable}{4mm} if every two of its $L$-colorings are
$L$-equivalent.\footnote{We first read about
$L$-swappability near the end of~\cite{feghali}.  For a planar graph $G$ and a
5-assignment $L$ for $G$, Feghali asked whether $G$ is necessarily
$L$-swappable.  As far as we know, this question remains open.} Cranston and Mahmoud~\cite{CM} showed that if $G$ is $k$-regular, with $k\ge
3$, and $L$ is a $k$-assignment for $G$, then $G$ is $L$-swappable (again with
the single exception of $K_3\dbox K_2$).

If $L$ is a list-assignment with $|L(v)|=d(v)$ for all vertices $v$, then $L$ is a
\emph{degree assignment}.  
If, for each degree assignment $L$, graph $G$ has at
least one $L$-coloring, then $G$ is \emph{degree-choosable}.
If $G$ is $L$-swappable whenever $L$ is a degree assignment, then $G$ is
\emph{degree-swappable}.  
A key step in the proof of Cranston and Mahmoud is showing that if $H$ is
degree-swappable and $H$ is an induced subgraph of $G$, then $G$ is also
degree-swappable.  This is perhaps unsurprising, since an analogous result holds
for degree-choosability.  However, we should note that being degree-swappable is
more restrictive than being degree-choosable.  For example, all even cycles are
2-choosable.  However, if $G$ is a 4-cycle $vwxy$ with $L(v)=\{1,2\}$,
$L(w)=\{2,3\}$, $L(x)=\{3,4\}$, and $L(y)=\{4,1\}$, then $G$ has two
$L$-colorings, but neither of these admits any $L$-valid Kempe swap.  Thus, $G$
is not degree-swappable.  In fact, no cycle is degree-swappable.  Further,
if we begin with any Gallai tree (a connected graph where each block is an odd
cycle or a clique) and add any edge, the resulting graph is degree-choosable,
but is not degree-swappable.

In this paper we investigate the swappability of the line graph $H$
of a graph $G$, which we typically call the \emph{edge-swappability} of $G$.  
We are not aware of previous work on this problem.\footnote{Ito, Kami\'{n}ski, and
Demaine~\cite{IKD} studied a similar problem, but they only allowed recoloring a
single edge at each step, which is a far more restrictive model.}
We focus on the case that $G$ is a simple planar graph.
Borodin~\cite{borodin} showed
that if such a $G$ has $\Delta(G)\ge 9$, then
every $(\Delta(G)+1)$-assignment $L$ to the line graph of $G$ admits an
$L$-coloring.  Bonamy~\cite{bonamy} later extended this result to such graphs
with $\Delta(G)\ge 8$. Borodin~\cite{borodin} also showed that if
$G$ is such a graph with $\Delta(G)\ge 14$, then every
$\Delta(G)$-assignment $L$ to the line graph of $G$ admits an $L$-coloring.
Borodin, Kostochka, and Woodall~\cite{BKW} later extended this result to 
such graphs with $\Delta(G)\ge 12$.
Here we prove two analogous results for swappability.

\begin{mainthm}
\label{main-thm1}
Let $G$ be a simple planar graph with $\Delta(G)\ge 9$.  If $H$ is the line graph of
$G$ and $L$ is a $(\Delta(G)+1)$-assignment for $H$, then all $L$-colorings of
$H$ are $L$-equivalent.
\end{mainthm}

\begin{mainthm}
\label{main-thm2}
Let $G$ be a simple planar graph with $\Delta(G)\ge 15$.  If $H$ is the line graph of
$G$ and $L$ is a $\Delta(G)$-assignment for $H$, then all $L$-colorings of $H$
are $L$-equivalent.
\end{mainthm}

\subsection{Proof Outline}
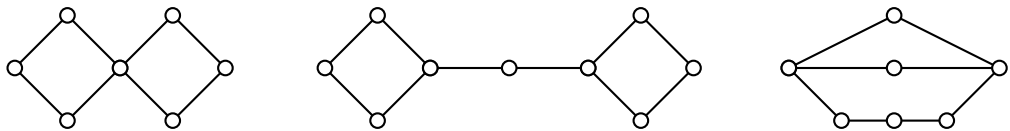
\begin{figure}[t]
\centering
\begin{tikzpicture}[thick, scale=.7]
\tikzstyle{uStyle}=[shape = circle, minimum size = 5.5pt, inner sep = 1pt,
outer sep = 0pt, draw, fill=white]
\tikzstyle{lStyle}=[shape = circle, draw=none, fill=none]
\tikzset{every node/.style=uStyle}
\draw[white] (0,-1.5) -- (.1,-1.5);

\draw (0,0) node {} 
--++ (1,1) node {}
--++ (1,-1) node {}
--++ (-1,-1) node {}
--++ (-1,1) 
--++ (-1,1) node {}
--++ (-1,-1) node {}
--++ (1,-1) node {}
--++ (1,1) node {};

\begin{scope}[xshift=3.5in]

\draw (0,0) node {} 
--++ (1,1) node {}
--++ (1,-1) node {}
--++ (-1,-1) node {}
--++ (-1,1) node {}
--++ (-1.5,0) node {}
--++ (-1.5,0) node {}
--++ (-1,1) node {}
--++ (-1,-1) node {}
--++ (1,-1) node {}
--++ (1,1) node {};
\end{scope}

\begin{scope}[xshift=5.0in]
\draw (0,0) node {} 
--++ (1,-1) node {}
--++ (1,0) node {}
--++ (1,0) node {}
--++ (1,1) node {}
--++ (-2,1) node {}
--++ (-2,-1) node {}
--++ (2,0) node {}
--++ (2,0);
\end{scope}
\end{tikzpicture}
\caption{Left: A short barbell.  Center: A non-short barbell.  Right: A
$\theta$-graph.\label{intro-pics}}
\end{figure}

In the rest of this introduction 
%(before ending with our definitions and terminology) 
we state our key lemmas and prove our main
results, assuming the lemmas.  In Section~\ref{swappability-sec} we prove the
lemmas on swappability.  And in Section~\ref{structural-sec} we use discharging
to prove our two structural lemmas.
A \Emph{barbell} is formed from two vertex-disjoint cycles by identifying one
vertex of each cycle with distinct ends of a path; see Figure~\ref{intro-pics}.  
We also allow that the path
has length 0, in which case we simply identify one vertex of the first cycle
with one vertex of the second; in this case, the barbell is
\EmphE{short}{-4mm}.  A \Emph{$\theta$-graph} is formed from two
non-adjacent vertices $v$ and $w$ by adding three internally disjoint $v,w$-paths.
A \EmphE{plane graph}{0mm} is a planar graph together with some planar embedding.
For a plane graph $G$, the subgraph \emph{$G_3$} is
induced by all edges incident to vertices of degree 3, and the subgraph
\emph{$G_2$} is induced by all edges incident to vertices of degree 2 that lie
on at least one 3-face.\aaside{$G_3$, $G_2$}{-4.75mm}
Most of our terminology and notation are standard.  But, for completeness, 
we provide a number of definitions at the end of the introduction.

Our first lemma was proved by Cranston and Mahmoud~\cite{CM}.  The proof is
long, so we omit it.\footnote{Since~\cite{CM} is not yet %available, 
published, we include a proof of Lemma~1 in the appendix.} 
However, in Section~\ref{swappability-sec} we do prove
Corollary~\ref{easy-cor}; see Lemma~\ref{degen-lem}.  This is the
special case when $H$ is an isolated vertex, which suggests some of the ideas
used to prove the more general case of Lemma~\ref{CM-lem}.
For a graph $G$ and $f:V(G)\to \Z^+$, we say $G$ is \EmphE{$f$-swappable}{0mm} if $G$ is $L$-swappable
whenever $L$ is a list assignment with $|L(v)|=f(v)$ for each vertex $v$.

%\setcounter{lemi}{0}
%\begin{NoHyper}
\begin{lem}
Fix a graph $G$ and a function $f:V(G)\to \Z^+$.  Let $H$ be an induced subgraph
of $G$ such that $G-H$ is $f$-swappable.  Let $f'(x):=f(x)-(d_G(x)-d_H(x))$ for
all $x\in V(H)$.  If $f'(x)\ge d_H(x)$ for all $x\in V(H)$ and $H$ is
both $f'$-choosable and $f'$-swappable, then $G$ is $f$-swappable.
%\label{H-lem}
\label{CM-lem}
\end{lem}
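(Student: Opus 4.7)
Fix a list assignment $L$ on $G$ with $|L(v)|=f(v)$ for every vertex $v$, and two $L$-colorings $\varphi_1$ and $\varphi_2$; the goal is to exhibit a sequence of $L$-valid Kempe swaps transforming $\varphi_1$ into $\varphi_2$. My plan has two phases: Phase~A moves $\varphi_1$ to an $L$-coloring $\varphi^\star$ with $\varphi^\star|_{G-H}=\varphi_2|_{G-H}$, and Phase~B transforms $\varphi^\star$ into $\varphi_2$ using only Kempe swaps whose color components lie entirely inside $H$.

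Phase~B is the easier half. Any Kempe swap whose component lies in $H$ and is valid with respect to the residual list $\widehat L(x):=L(x)\setminus\{\varphi_2(y):y\in N_G(x)\setminus V(H)\}$ is automatically $L$-valid in $G$, because no vertex of $G-H$ is touched. Each $|\widehat L(x)|\ge f'(x)$, and both $\varphi^\star|_H$ and $\varphi_2|_H$ are $\widehat L$-colorings of $H$. After a modest bookkeeping step---restricting $\widehat L$ down to an $f'$-assignment still containing the colors that these two colorings use at $x$, which is possible because $f'(x)\ge d_H(x)$---the $f'$-swappability of $H$ furnishes the desired sequence of swaps.

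For Phase~A, I would invoke $f$-swappability of $G-H$: there is a sequence $\tau_0:=\varphi_1|_{G-H},\ \tau_1,\ \ldots,\ \tau_n:=\varphi_2|_{G-H}$ of $L|_{G-H}$-colorings of $G-H$ with consecutive entries differing by an $L|_{G-H}$-valid Kempe swap. For each $i\ge 1$, apply $f'$-choosability of $H$ to the residual list $L^{\tau_i}(x):=L(x)\setminus\{\tau_i(y):y\in N_G(x)\cap V(G-H)\}$ (whose size is $\ge f'(x)$) to extend $\tau_i$ to an $L$-coloring $\tilde\tau_i$ of $G$; set $\tilde\tau_0:=\varphi_1$. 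Once we know $\tilde\tau_i$ is $L$-equivalent to $\tilde\tau_{i+1}$ in $G$ for every $i$, Phase~A is finished by taking $\varphi^\star:=\tilde\tau_n$.

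The main obstacle is precisely this last step: showing $\tilde\tau_i$ is $L$-equivalent to $\tilde\tau_{i+1}$ when $\tau_i\to\tau_{i+1}$ is an $(a,b)$-swap on a component $C$ of $(G-H)[a,b]$. Performing the analogous swap in $G$ requires using the $(a,b)$-component $C^\star$ of $G[a,b]$ containing $C$; typically $C\subsetneq C^\star$ with $V(C^\star)\setminus V(C)\subseteq V(H)$, and the swap on $C^\star$ need not be $L$-valid (some $x\in V(C^\star)\cap V(H)$ may be colored $a$ while $b\notin L(x)$). My strategy is to first use Phase-B-style swaps inside $H$ to modify $\tilde\tau_i|_H$ into an auxiliary $H$-coloring under which $C^\star=C$: no vertex of $H$ adjacent in $G$ to a color-$a$ vertex of $C$ is colored $b$, and symmetrically for $b$. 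Then the $(a,b)$-swap on $C$ itself is $L$-valid in $G$, after which further Phase-B swaps bring us to $\tilde\tau_{i+1}$. The delicate point is producing such an auxiliary coloring: the naive attempt---applying $f'$-choosability after excising $\{a,b\}$ from the lists at boundary vertices of $H$ near $V(C)$---can shrink some lists below $d_H(x)$, so the hypothesis $f'(x)\ge d_H(x)$ must be combined with a careful local argument, exploiting that only one of $\{a,b\}$ is genuinely forbidden at most boundary vertices and that the remaining ``doubly forbidden'' vertices can be handled individually. This auxiliary-coloring construction is the subtle core of the proof, and is where the argument becomes longest.
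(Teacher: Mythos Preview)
Your two-phase plan and your use of $f'$-swappability to slide between different extensions of the same $\tau_i$ are exactly what the paper does. The gap is precisely where you flag it: the auxiliary coloring forcing $C^\star=C$. Your heuristic that ``only one of $\{a,b\}$ is genuinely forbidden at most boundary vertices'' does not rescue the situation, because stripping even a single extra color at each boundary vertex $x$ can drop the residual list to $d_H(x)-1$, and once several such vertices occur there is no reason an $L'$-coloring of $H$ should exist. (Concretely, take $H=K_4-e$ with the degree assignment, give every vertex $b$ in its list, and make every vertex a singly-forbidden boundary vertex; after deleting $b$ the lists are $\{1,2\},\{1\},\{1,2\},\{2\}$ around the $4$-cycle with chord, and no proper coloring exists.) Nothing in the hypotheses $f'$-choosable plus $f'$-swappable lets you absorb this loss, and you have not supplied the ``careful local argument'' you allude to.

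The paper sidesteps this entirely by \emph{not} trying to isolate $C$. Instead it proves a ``versatile extension'' lemma: when $f'(x)=d_H(x)$ for all $x$, $f'$-choosability forces $H$ to be a non--Gallai-tree, hence $H$ contains an induced even cycle with at most one chord. Using that cycle as a sink, one greedily extends $\tau_i$ to $H$ vertex by vertex (toward the cycle, then around it), at each step removing from $L(x)$ not only colors on already-colored neighbors but also the \emph{partner} color in $\{a,b\}$ whenever it would cause trouble; the bookkeeping shows each residual list stays of size at least $d_H(x)$ minus the number of uncolored $H$-neighbors, and the even cycle absorbs the final deficit. The resulting extension $\widetilde\varphi_i$ has two properties: the $(a,b)$-swap at the relevant vertex is $L$-valid in $G$, and every $(a,b)$-component of $\widetilde\varphi_i$ contains the vertex set of at most one $(a,b)$-component of $\tau_i$. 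That second property (rather than $C^\star=C$) is what guarantees the swap in $G$ restricts to the intended swap on $G-H$. The structural input---non--Gallai-tree, hence an even cycle with at most one chord---is the idea your outline is missing.
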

%\end{NoHyper}

Erd\H{o}s, Rubin, and Taylor~\cite{ERT} characterized the connected graphs that are
not degree-choosable; these are precisely Gallai trees.  So each connected graph
$H$ is $f'$-choosable, as in the previous lemma, unless both (a) $f'(v)=d_H(v)$
for all $v\in V(H)$ and (b) $H$ is a Gallai tree.

\begin{cor}
Fix a graph $G$ and a function $f:V(G)\to \Z^+$, and let $v$ be a vertex with
$f(v)>d(v)$.  If $G-v$ is $f$-swappable, then $G$ is $f$-swappable.
\label{easy-cor}
\end{cor}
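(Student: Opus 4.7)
The plan is to apply Lemma~\ref{CM-lem} with the single-vertex induced subgraph $H := G[\{v\}]$. Under this choice, $G - H = G - v$ is $f$-swappable by hypothesis, so the only remaining content is to verify the conditions that Lemma~\ref{CM-lem} imposes on $H$: namely, $f'(x) \ge d_H(x)$ for all $x \in V(H)$, and $H$ is both $f'$-choosable and $f'$-swappable.

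Since $v$ is isolated in $H$, we have $d_H(v) = 0$, so
\[
f'(v) = f(v) - (d_G(v) - d_H(v)) = f(v) - d(v) \ge 1,
\]
where the final inequality uses $f(v) > d(v)$. This gives $f'(v) \ge 0 = d_H(v)$ and, equivalently, a nonempty list at $v$. An isolated vertex with a nonempty list is trivially $f'$-choosable (pick any color from $L(v)$). For $f'$-swappability, I would observe that for any two colors $c, c' \in L(v)$ the $(c,c')$-component of $H$ containing $v$ is $\{v\}$ itself, so the corresponding Kempe swap merely replaces the color at $v$ and is $L$-valid because $c' \in L(v)$; hence any two $L$-colorings of the isolated vertex are $L$-equivalent.

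Having verified all hypotheses, Lemma~\ref{CM-lem} immediately concludes that $G$ is $f$-swappable. I do not foresee a genuine obstacle in this argument: the corollary is exactly the degenerate case of Lemma~\ref{CM-lem} in which $H$ reduces to a single vertex, and the hypothesis $f(v) > d(v)$ supplies precisely the one ``free'' color at $v$ that is needed to extend the swappability statement from $G - v$ to all of $G$.
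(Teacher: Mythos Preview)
Your proof is correct and is exactly the derivation the paper has in mind when it places Corollary~\ref{easy-cor} immediately after Lemma~\ref{CM-lem}; the paper explicitly remarks that this corollary ``is the special case when $H$ is an isolated vertex.'' That said, the paper also supplies a self-contained direct proof (Lemma~\ref{degen-lem}) that does not invoke Lemma~\ref{CM-lem} at all: given a sequence of $L$-valid Kempe swaps in $G-v$, it extends each intermediate coloring to $G$ by hand, recoloring $v$ with a spare color from $L(v)\setminus\vph(N[v])$ whenever $v$ would otherwise obstruct the next swap. The direct argument is more elementary and serves to preview the ideas behind the much longer proof of Lemma~\ref{CM-lem}, whereas your route is the cleanest formal derivation once Lemma~\ref{CM-lem} is available.
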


To prove each of our main results, we assume that it is false and choose $G$ and $L$ to
be a counterexample minimizing $\|G\|$.  Let $k:=|L(v)|$ for all vertices $v$.
The point of Lemma~\ref{CM-lem} is that if some subgraph $H$ is $f'$-swappable
(and $f'$-choosable), then it cannot appear as an induced subgraph of
$G$.  If so, then we get that $G-H$ is $f$-swappable by minimality, and this
implies that $G$ is $f$-swappable, by Lemma~\ref{CM-lem}.  For a planar graph
$G$, Borodin showed that either (i) $G$ contains a so-called ``light edge'', one
incident to fewer than $k$ other edges or (ii) $G$ contains an even length cycle
with each edge incident to $k-2$ edges outside the cycle.
Since neither (i) nor (ii) appears in a minimal
counterexample (that is, both (i) and (ii) are \emph{reducible}), his theorems
follows.  We can handle (i) similarly, using Corollary~\ref{easy-cor}.
However, the reducibility of (ii) for Borodin's theorems relies
on the fact that even cycles are 2-choosable.  As we noted above, even cycles are
not 2-swappable.  Thus, to prove our main theorems, our structural lemmas must give
stronger conclusions.  So we replace (ii) with two larger types of subgraphs
(see (C2) and (C3)), each of which contains at least two instances of (ii).

\begin{struct-lem}
\label{structural-lem1}
If $G$ is a simple plane graph with $\delta\ge 2$, then either 
\begin{itemize}
\item[(C1)] $G$ contains an edge $vw$ with $d(v)+d(w)\le \max\{11,\DeltaG+2\}$; or
\item[(C2)] $G_3$ contains a bipartite barbell (possibly short); or
\item[(C3)] $G_3$ contains a bipartite $\theta$-graph other than $K_{2,3}$.
\end{itemize}
\end{struct-lem}

\begin{struct-lem}
\label{structural-lem2}
If $G$ is a simple plane graph with $\delta\ge 2$, then either 
\begin{itemize}
\item[(C1)] $G$ contains an edge $vw$ with $d(v)+d(w)\le 16$; or
\item[(C2)] $G_2$ contains a bipartite barbell (possibly short); or
\item[(C3)] $G_2$ contains either $K_{2,4}$ or a bipartite $\theta$-graph other than $K_{2,3}$.
\end{itemize}
\end{struct-lem}

When we use Structural Lemma~\ref{structural-lem1} to prove Main
Theorem~\ref{main-thm1}, it is straightforward to show that (C1) and (C2) are
reducible, given Lemma~\ref{CM-lem}.  The same is true when using
Structural Lemma~\ref{structural-lem2} to Prove Main Theorem~\ref{main-thm2}. 
However, handling (C3) is more involved.  For now, we simply
state the needed result, and use it to prove our two main theorems.  In
Section~\ref{swappability-sec}, we provide the proof.  

\begin{lem}
\label{reduc-lem}
A graph $G$ is degree-swappable whenever it is the line graph of either (1) a
bipartite barbell (possibly short) or (2) a bipartite $\theta$-graph other
than $K_{2,3}$ or (3) $K_{2,4}$.
\end{lem}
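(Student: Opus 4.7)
The plan is to dispatch the three cases separately.

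For case (3), the line graph $L(K_{2,4})$ is isomorphic to $K_4\dbox K_2$, a connected $4$-regular graph distinct from the exceptional graph $K_3\dbox K_2$. The theorem of Cranston and Mahmoud on $k$-regular graphs cited in the introduction therefore gives that $K_4\dbox K_2$ is $4$-swappable; since every degree-assignment on this $4$-regular graph is a $4$-assignment, this is exactly degree-swappability.

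For cases (1) and (2), I would attempt an inductive argument on the number of edges of $G$. The aim is to apply Lemma~\ref{CM-lem} by peeling off an induced subgraph of $L(G)$ that reduces the problem to a smaller instance in the same family. In case (2), with path lengths $\ell_1 \le \ell_2 \le \ell_3$ of matching parity (each $\ell_i \ge 2$, not all equal to $2$), I would shorten the longest path $P_3$ by two edges---preserving the same-parity constraint---to reduce to a smaller bipartite $\theta$-graph. In case (1), I would shorten the connecting path by two edges when it has length at least $2$, and shorten one of the two cycles by two edges otherwise. Base cases are the shortest non-excluded members of each family: for (2), the thetas with $(\ell_1,\ell_2,\ell_3) = (2,2,4)$ and $(3,3,3)$; for (1), the short barbell of two $C_4$'s sharing a vertex, and the smallest non-short barbell (two $C_4$'s joined by one edge). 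I would verify each base case directly, exploiting the cliques in $L(G)$ at the high-degree vertices of $G$---two $K_3$'s at the degree-$3$ vertices, or the single $K_4$ at the degree-$4$ shared vertex in the short-barbell case---which provide enough Kempe-swap flexibility to transform any two $L$-colorings into each other.

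The main obstacle is making the inductive reduction compatible with Lemma~\ref{CM-lem}. Deleting a single vertex from $L(G)$ corresponds to deleting an edge from $G$, which does not in general produce the line graph of a smaller graph in the same family: for instance, deleting an interior edge of $P_3$ from $G$ disconnects $P_3$ and yields a ``handcuff''-shaped graph rather than a $\theta$-graph. Hence the inductive reduction requires a more careful choice of peeled subgraph---likely two adjacent interior line-graph vertices whose removal, together with a reidentification of the two newly-exposed endpoints, corresponds to an edge contraction in $G$---so that the retained subgraph really is the line graph of a graph in the same family. Verifying the Lemma~\ref{CM-lem} hypotheses should then be routine: the peeled subgraph is a $K_2$ with lists of size $2$, which is easily $f$-swappable; the retained subgraph is $f'$-choosable because it contains an even cycle arising from the bipartite cycle-structure of $G$, so it is not a Gallai tree. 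A secondary technical issue is the short-barbell case in (1), where the shared degree-$4$ vertex gives $L(G)$ a $K_4$ with two length-$2$ path-``ears'' rather than two separate $K_3$'s; this structure requires its own direct analysis as one of the base cases.
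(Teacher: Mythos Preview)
Your handling of case~(3) via the Cranston--Mahmoud regular-graph theorem is correct and in fact cleaner than the paper's direct verification in Lemma~\ref{K24-lem}.

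For cases~(1) and~(2), however, the inductive plan has a genuine gap. In Lemma~\ref{CM-lem}, when $f$ is the degree function on $G$, the derived function $f'$ on the retained subgraph $H$ is exactly $d_H$. So you need $H$ to be both degree-choosable \emph{and degree-swappable}. You verify the former (``not a Gallai tree''), but never the latter---and that is precisely what the induction is supposed to supply. The induction would supply it only if $H$ were itself the line graph of a smaller barbell or $\theta$-graph; but, as you yourself note, deleting two interior line-graph vertices disconnects the corresponding path and does not yield such a graph. Your proposed fix, ``reidentification of the two newly-exposed endpoints'', is not an operation Lemma~\ref{CM-lem} permits: that lemma works with an honest induced subgraph $H$, nothing more. (The alternative orientation---taking $H$ to be the peeled $K_2$---also fails: then $f'=d_H=1$ on each vertex of $H$, and $K_2$ is not $1$-choosable, so the hypothesis ``$H$ is $f'$-choosable'' is violated.) A minor additional omission: your case split for~(2) assumes every $\ell_i\ge 2$, missing bipartite $\theta$-graphs with one path of length~$1$, such as $(1,3,3)$.

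The paper proceeds quite differently: it gives direct arguments for each family (Lemma~\ref{barbell-swappable-lem} for barbells, Lemma~\ref{short-theta-lem} for $\theta$-graphs with a length-$1$ path, Lemma~\ref{prism-lem} for prisms). The common engine is Lemma~\ref{big-intersection}: for any edge $st$ of the line graph, if $|L(s)\cap L(t)|\le 1$ one is done immediately, so one may assume all incident lists overlap heavily. This forces the $2$-lists along each path to coincide and pins the $3$-lists at the triangles down to a few named colours. One then partitions $\LL$ into finitely many pieces of the form $\LL_{v,\alpha}$ or $\LL_{v,\alpha}\cap\LL_{w,\alpha}$, each of which mixes by Corollary~\ref{degen-cor2} or~\ref{degen-cor3}, and exhibits colourings in their pairwise intersections. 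No induction on size is used.
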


The following result immediately implies our Main Theorem~\ref{main-thm1}.
It is slightly more technical, so as to better facilitate
a proof by minimal counterexample.

\begin{thm}
Let $G$ be a simple plane graph, let $J$ be its line graph, and let
$k:=\max\{10,\DeltaG+1\}$.  If $L$ is a $k$-assignment for $J$, then
$J$ is $L$-swappable.
\label{main1-strong}
\end{thm}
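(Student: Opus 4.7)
The plan is to argue by minimum counterexample, using Structural Lemma~\ref{structural-lem1} to locate a reducible configuration and then eliminating it via Lemma~\ref{CM-lem} and Lemma~\ref{reduc-lem}. Let $(G,L)$ be a counterexample with $\|G\|$ minimum, write $J$ for the line graph of $G$, and set $f\equiv k$ on $V(J)$. First I would reduce to $\delta(G)\ge 2$: a degree-$1$ vertex $u$ with incident edge $e=uv$ satisfies $d_J(e)=d_G(v)-1\le\Delta(G)-1\le k-2<f(e)$, so Corollary~\ref{easy-cor}, together with the fact that $J-e$ (which is $L(G-e)$) is $f$-swappable by minimality, forces a contradiction. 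Thus $\delta(G)\ge 2$, and Structural Lemma~\ref{structural-lem1} applies and produces one of (C1), (C2), or (C3).

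Case (C1) is immediate: the guaranteed edge $vw$ with $d_G(v)+d_G(w)\le k+1$ corresponds to a vertex of $J$ of degree at most $k-1<k$, and Corollary~\ref{easy-cor} finishes as above. For (C2) and (C3), let $B$ be the bipartite barbell (respectively the bipartite $\theta$-graph other than $K_{2,3}$) contained in $G_3$, and set $H:=L(B)$, an induced subgraph of $J$. Then $J-H=L(G-E(B))$ is the line graph of a simple plane graph with strictly fewer edges than $G$, so by minimality $J-H$ is $f$-swappable. Lemma~\ref{reduc-lem} provides that $H$ is degree-swappable, and bipartiteness of $B$ forces $L(B)$ to contain an even cycle within a block that is neither a clique nor an odd cycle, so $L(B)$ is not a Gallai tree and is therefore degree-choosable.

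To finish via Lemma~\ref{CM-lem} it suffices to verify that $f'(x)=d_H(x)$ for every $x=uv\in E(B)$, since this equality promotes degree-choosability and degree-swappability of $H$ to $f'$-choosability and $f'$-swappability. A direct computation gives $f'(x)-d_H(x)=k+2-d_G(u)-d_G(v)$. Since $x\in E(G_3)$, at least one of $u,v$ has $d_G=3$, so the other has $d_G\le \Delta(G)\le k-1$, giving $d_G(u)+d_G(v)\le k+2$; conversely, since (C1) has already been handled, every edge of $G$ satisfies $d_G(u)+d_G(v)\ge k+2$. Combining these two bounds gives $f'=d_H$ on $V(H)$, and Lemma~\ref{CM-lem} concludes that $J$ is $f$-swappable, contradicting the choice of counterexample. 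The substantive content is absorbed into Structural Lemma~\ref{structural-lem1} and Lemma~\ref{reduc-lem}, which are proved elsewhere; within the present argument the only nontrivial ingredient is the algebraic pinning $f'=d_H$, which falls out of combining the $G_3$ condition with the failure of (C1).
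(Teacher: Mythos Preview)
Your argument is correct and follows essentially the same route as the paper's proof: minimal counterexample on $\|G\|$, dispose of $\delta(G)\le 1$ and (C1) via Corollary~\ref{easy-cor}, and for (C2)/(C3) compute $f'=d_H$ on the line graph of the configuration and invoke Lemma~\ref{CM-lem} together with Lemma~\ref{reduc-lem}. Your exposition is in fact slightly more explicit than the paper's in that you verify degree-choosability of $H=L(B)$ (via the Gallai-tree characterization), whereas the paper leaves that to the remark following Lemma~\ref{CM-lem}.
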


\begin{proof}
We view $L$ as an edge assignment for $G$, and say that $G$ is
\emph{$L$-edge-swappable} to mean that $J$ is $L$-swappable.
Suppose the theorem is false; let $G$ and $L$ be a counterexample minimizing
$\|G\|$.  Since we are coloring edges, we assume that $\delta(G)\ge 1$.  
First suppose that $\delta(G)=1$, and let $e$ be an edge incident to a 1-vertex.
By minimality, $G-e$ is $L$-edge-swappable.  And $e$ is incident to fewer than
$|L(e)|$ other edges.  So $G$ is $L$-edge-swappable, by Corollary~\ref{easy-cor}.

Now assume instead that $\delta(G)\ge 2$.
By Structural Lemma~\ref{structural-lem1}, we know that either
(C1) $G$ contains an edge $vw$ with $d(v)+d(w)\le 
\max\{11,\DeltaG+2\}$; or
(C2) $G_3$ contains a bipartite barbell (possibly short); or
(C3) $G_3$ contains a bipartite $\theta$-graph other than $K_{2,3}$.
First suppose that (C1) holds.  By the minimality of $G$, we know that $G-vw$ is
$L$-edge-swappable.  Note that $|L(vw)|= \max\{10,\DeltaG+1\}$, but $vw$ has at most
$\max\{9,\DeltaG\}$ incident edges, i.e., $d_H(vw)<|L(vw)|$.  So, by
Corollary~\ref{easy-cor}, we know that $G$ is $L$-edge-swappable.

Now suppose instead that (C2) or (C3) holds, but (C1) does not.  Let $H$ be the induced subgraph of
$G$ in (C2) or (C3), and let $H'$ denote the line graph of $H$.  By definition, each
edge $vw$ in $G_3$ has $d(v)+d(w)\le 3+\DeltaG$.  Since (C1) does not hold, this
inequality holds with equality.  That is, the number of edges incident to $vw$
is exactly $3+\DeltaG-2=\DeltaG+1$; so $d_J(vw)=|L(vw)|$.  Thus,
$|L(vw)|-(d_J(vw)-d_{H'}(vw))=d_{H'}(vw)$.  By Lemma~\ref{reduc-lem}, we know
that $H'$ is degree-swappable.  Hence, Lemma~\ref{CM-lem} implies that $J$ is
$L$-swappable, i.e., that $G$ is $L$-edge-swappable, as desired.
\end{proof}

Our next result immediately implies Main Theorem~\ref{main-thm2}.  Similarly
to Theorem~\ref{main1-strong}, it is slightly more technical than Main
Theorem~\ref{main-thm2}, to better facilitate a proof by minimal counterexample.

\begin{thm}
Let $G$ be a simple plane graph, let $J$ be its line graph, and let
$k:=\max\{15,\DeltaG\}$.  If $L$ is a $k$-assignment for $J$, then
$J$ is $L$-swappable.
\label{main2-strong}
\end{thm}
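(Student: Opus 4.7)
The plan is to mimic the proof of Theorem~\ref{main1-strong}, now with $k:=\max\{15,\Delta\}$ and Structural Lemma~\ref{structural-lem2} in place of Structural Lemma~\ref{structural-lem1}. I would fix a counterexample $(G,L)$ minimizing $\|G\|$, view $L$ as an edge assignment on $G$, and first dispose of $\delta(G)=1$ by deleting a pendant edge $e$: minimality makes the smaller graph $L$-edge-swappable, and $d_J(e)\le \Delta-1<k$ lets Corollary~\ref{easy-cor} close the case.

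For $\delta(G)\ge 2$ I would apply Structural Lemma~\ref{structural-lem2}. In case (C1), an edge $vw$ with $d(v)+d(w)\le 16$ has $d_J(vw)\le 14<15\le k$, so Corollary~\ref{easy-cor} applied to $G-vw$ finishes. In case (C2) or (C3) (with (C1) failing), let $H$ be the bipartite barbell, $K_{2,4}$, or bipartite $\theta$-graph (other than $K_{2,3}$) the lemma supplies, and let $H'$ denote its line graph. The crucial observation is that every edge $vw\in E(H)\subseteq E(G_2)$ is incident to a 2-vertex $v$ of $G$, giving $d_J(vw)=d_G(v)+d_G(w)-2=d_G(w)\le\Delta\le k$. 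Setting $f'(vw):=k-(d_J(vw)-d_{H'}(vw))$, one then has $f'(vw)\ge d_{H'}(vw)$. Since Lemma~\ref{reduc-lem} delivers degree-swappability of $H'$, and $H'$ is degree-choosable (none of the three shapes is a Gallai tree), Lemma~\ref{CM-lem} would yield $L$-swappability of $J$, contradicting our choice of counterexample.

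The main obstacle, analogous to the equality step in the proof of Theorem~\ref{main1-strong}, is that Lemma~\ref{CM-lem} demands $f'$-swappability of $H'$, not merely degree-swappability. In Theorem~\ref{main1-strong}, the count pinned $d_J(vw)=\Delta+1=k$ exactly, so $f'=d_{H'}$ and the two notions coincided. Here the equality is forced only when $\Delta=15$; when $\Delta>15$, an edge $vw\in E(H)$ could have $d_G(w)<\Delta=k$, giving $f'(vw)>d_{H'}(vw)$. The hard part is therefore to verify that the proof of Lemma~\ref{reduc-lem} in Section~\ref{swappability-sec} actually delivers $f$-swappability for every $f\ge d_{H'}$, not only for $f=d_{H'}$. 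I expect this to follow from the direct Kempe-swap analysis on these three small line graphs, since extra colors only enlarge the set of available swaps.
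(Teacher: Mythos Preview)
Your approach matches the paper's: it simply says the proof is ``essentially identical to that of Theorem~\ref{main1-strong}'', replacing $G_3$ by $G_2$ and Structural Lemma~\ref{structural-lem1} by Structural Lemma~\ref{structural-lem2}. Your handling of $\delta(G)=1$ and of (C1) is exactly right.

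You are also right to flag the equality step: unlike (C1) in Structural Lemma~\ref{structural-lem1}, the bound $d(v)+d(w)\le 16$ in Structural Lemma~\ref{structural-lem2} carries no $\Delta$-dependence, so for $vw\in E(G_2)$ with $d(v)=2$ the failure of (C1) only forces $d(w)\ge 15$, not $d(w)=\Delta$. Hence $f'(vw)=k-d_J(vw)+d_{H'}(vw)$ may exceed $d_{H'}(vw)$ when $\Delta>15$. The paper glosses over this.

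However, you overstate the difficulty of closing this gap. There is no need to revisit the case analysis inside Lemma~\ref{reduc-lem}. Simply split on whether $f'=d_{H'}$ identically. If $f'(vw)=d_{H'}(vw)$ for every $vw\in V(H')$, then $f'$-swappability of $H'$ is exactly degree-swappability, which is Lemma~\ref{reduc-lem}. If instead $f'(vw)>d_{H'}(vw)$ for some $vw$, then since $H'$ is connected and $f'(xy)\ge d_{H'}(xy)$ for all $xy$, the second statement of Corollary~\ref{degen-cor1} gives $f'$-swappability of $H'$ directly; $f'$-choosability is likewise immediate by greedy coloring toward $vw$. Either way Lemma~\ref{CM-lem} applies, and the contradiction follows.
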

We omit the proof of Theorem~\ref{main2-strong}, since it is essentially
identical to that of Theorem~\ref{main1-strong}.  The only differences are that
we use $G_2$ in place of $G_3$ and that we use Structural Lemma 2 in place of
Structural Lemma 1.

\subsection{Definitions and Terminology}
We write $[k]$ to denote $\{1,\ldots,k\}$.\aside{$[k]$}\aaside{$|G|$,
$\|G\|$}{4mm}
We write $d(v)$ for the degree of a vertex $v$.
For a graph $G$, we write $|G|$ to denote $|V(G)|$ and write $\|G\|$ to denote $|E(G)|$.  
We write $\delta(G)$ for the minimum degree in $G$ and
$\Delta(G)$ for the maximum degree; when $G$ is clear from context, we may simply
write $\delta$ or $\Delta$.  
A \EmphE{plane graph}{-4mm} is a planar graph
along with some planar embedding.  
A \EmphE{$k$-vertex}{4mm} is a vertex of degree $k$, and a
\EmphE{$k$-neighbor}{4mm} (of some vertex $v$) is an adjacent $k$-vertex.  
In a plane graph, a $k$-face is a face of length $k$.  
The \emph{length} of a face $f$ is denoted by $\ell(f)$.
A \EmphE{$k^+$-vertex}{8mm} (resp.~\emph{$k^-$-vertex}) is a vertex of degree
at least $k$  (resp.~at most $k$). We define $k^+$-neighbors, $k^-$-neighbors, $k^+$-faces,
and $k^-$-faces analogously.  The \emph{line graph} $J$ of a graph $G$ has $V(J)=E(G)$
and $e_1e_2\in E(J)$ if and only if $e_1$ and $e_2$ share an endpoint in $G$.
The \emph{Cartesian product}, \Emph{$G_1\dbox
G_2$}, of graphs $G_1$ and $G_2$ has vertex set $\{(v,w):v\in V(G_1),w\in
V(G_2)\}$ and $(v_1,w_1)$ is adjacent to $(v_2,w_2)$ if either (a) $v_1=v_2$
and $w_1w_2\in E(G_2)$ or (b) $w_1=w_2$ and $v_1v_2\in E(G_1)$.

\section{Swappability Lemmas}
\label{swappability-sec}
In this section, we prove all of our swappability lemmas.
Recall, for a graph $G$ and a list assignment $L$ for $G$, that $G$ is
\Emph{$L$-swappable} if every two of its $L$-colorings $\vph_1$ and $\vph_2$ are
$L$-equivalent.  That is, if $\vph_1$ can be transformed to $\vph_2$ by a
sequence of $L$-valid Kempe swaps.

Our first lemma is essentially due to Las Vergnas and Meyniel~\cite{lVM}.  They
proved it in the context of coloring, rather than list-coloring, but the proof
for list-coloring is nearly identical.  It is a slight reformulation of
Corollary~\ref{easy-cor} from the introduction.

\begin{lem}
\label{degen-lem}
Fix a connected graph $G$, a list-assignment $L$ for $G$, and $v\in V(G)$ with
$|L(v)|>d(v)$.  %Let $G':=G-v$ and let $L'$ denote $L$ restricted to $G'$.
If $G-v$ is $L$-swappable, then $G$ is $L$-swappable.
\end{lem}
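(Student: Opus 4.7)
The plan is to reduce the $L$-swappability of $G$ to that of $G-v$ via two sub-claims. Sub-claim (A): two $L$-colorings of $G$ that agree on $G-v$ are $L$-equivalent in $G$. Sub-claim (B): for every $L$-coloring $\vph$ of $G$ and every $L$-valid Kempe swap $\sigma$ in $G-v$ sending $\vph|_{G-v}$ to $\psi'$, there is a sequence of $L$-valid Kempe swaps in $G$ starting at $\vph$ that reaches some $L$-coloring restricting to $\psi'$ on $G-v$. Given (A) and (B): for any two $L$-colorings $\vph_1,\vph_2$ of $G$, take an $L$-valid swap-sequence in $G-v$ from $\vph_1|_{G-v}$ to $\vph_2|_{G-v}$ (using the hypothesis), lift each step via (B) to build an $L$-valid swap-sequence in $G$ from $\vph_1$ to some $\vph'$ with $\vph'|_{G-v}=\vph_2|_{G-v}$, and then apply (A) to conclude $\vph'$ and $\vph_2$ are $L$-equivalent in $G$.

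Sub-claim (A) will be immediate: if $\vph,\vph'$ agree on $G-v$, then $\vph(v)$ and $\vph'(v)$ both lie in $L(v)$ and avoid the common set of colors on $N(v)$, so the Kempe swap on the singleton component $\{v\}$ that exchanges these two colors is $L$-valid in $G$ and takes $\vph$ to $\vph'$.

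For sub-claim (B), write $\sigma$ as an $L$-valid $(a,b)$-swap on component $C$ in $G-v$ and let $c:=\vph(v)$. Three cases handle most of the work. If $c\notin\{a,b\}$, or if $c\in\{a,b\}$ but $C$ is disjoint from $v$'s $(a,b)$-component $C_v$ in $G$, then $C$ remains a component of the $(a,b)$-subgraph of $G$ and no neighbor of $v$ lies in $C$, so the same swap in $G$ is $L$-valid. Otherwise $c=a$ (say) and $C\subseteq C_v$, so some neighbor of $v$ in $C$ carries color $b$. Since $|L(v)|>d(v)$, some color in $L(v)$ avoids the colors on $N(v)$; if such a color $c'\neq a$ exists, then automatically $c'\neq b$ (as $b$ appears on a neighbor of $v$), so a trivial Kempe swap on $\{v\}$ recolors $v$ to $c'$ and reduces to the previous case.

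The main obstacle is the remaining sub-subcase in which $a$ is the only color in $L(v)$ avoiding the neighbors of $v$. A short counting argument then forces $|L(v)|=d(v)+1$, and the neighbors of $v$ must carry pairwise distinct colors exhausting $L(v)\setminus\{a\}$; in particular, $v$ has a unique $b$-neighbor $u$, and $u\in C$. The components of $C_v-v$ are precisely the $(a,b)$-components of $G-v$ containing a $b$-neighbor of $v$, so only the component of $u$, namely $C$, qualifies, and thus $C_v=\{v\}\cup C$. The $(a,b)$-Kempe swap on the full component $C_v$ in $G$ is then $L$-valid, and its effect is to perform $\sigma$ on $C$ while simultaneously flipping $v$'s color from $a$ to $b$, producing the required $L$-coloring of $G$ whose restriction to $G-v$ is $\psi'$. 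This completes (B).
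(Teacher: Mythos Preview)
Your argument is correct and matches the paper's approach: lift each Kempe swap from $G-v$ to $G$ by either performing it directly, first recoloring $v$ off $\{a,b\}$ via a trivial swap, or (when no such recoloring is available) recognizing that $v$ must be a leaf of its $(a,b)$-chain with $b\in L(v)$ and swapping the full chain $C_v=\{v\}\cup C$. The paper organizes the same three cases in a different order (it treats the leaf case before the recolor-$v$ case), but the content is identical.

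One small inaccuracy: in the sub-case $c\notin\{a,b\}$ you assert that no neighbor of $v$ lies in $C$, but this need not hold (a neighbor of $v$ colored $a$ or $b$ may well sit in $C$); the swap on $C$ is nonetheless $L$-valid in $G$ simply because $\vph(v)=c\notin\{a,b\}$, so no swapped vertex can conflict with $v$.
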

\begin{proof}
Let $\vph_1$ and $\vph_2$ denote $L$-colorings of $G$, and let $\vph'_1$ and
$\vph'_2$ denote their restrictions to $G'$.  Since $G'$ is $L'$-swappable,
there exist $L'$-colorings $\psi'_0,\ldots,\psi'_t$ of $G'$ such that
$\psi'_0=\vph'_1$ and $\psi'_t=\vph'_2$ and $\psi'_i$ differs from
$\psi'_{i-1}$ by a single $L$-valid Kempe swap, for each $i\in [t]$.  Now we
use induction on $i$ to 
extend each $\psi'_i$ to an $L$-coloring $\psi_i$ of $G$ such that $\psi_i$ and
$\psi_{i-1}$ are $L$-equivalent.  (The base case, $i=0$, is trivial, since
$\psi_0'$ is a restriction of $\vph_1$.)

Suppose that $\psi'_i$ differs from
$\psi'_{i-1}$ by an $\alpha,\beta$-swap at $v_i$, for some $v_i\in
V(G)\setminus\{v\}$ and
some colors $\alpha$ and $\beta$.  If $\psi_{i-1}(v)\notin\{\alpha,\beta\}$ or
if $v$ is not in the same $\alpha,\beta$-component of $\psi_{i-1}$ as $v_i$,
then we form $\psi_i$ from $\psi_{i-1}$ by performing the same
$\alpha,\beta$-swap at $v_i$.  This approach also works if
$\{\alpha,\beta\}\subseteq
L(v)$ and $v$ is in the same $\alpha,\beta$-component as $v_i$, but $v$ has
degree 1 in that component.  So suppose $v$ is in the same
$\alpha,\beta$-component as $v_i$, but either $|L(v)\cap \{\alpha,\beta\}|=1$
or $v$ has degree at least 2 in that $\alpha,\beta$-component.  Since
$|L(v)|>d(v)$, there exists $\gamma\in L(v)$ that is unused by $\psi_{i-1}$
on the closed neighborhood of $v$.  We first recolor $v$ with $\gamma$, and
then perform the $\alpha,\beta$-swap at $v_i$.  %By induction on $i$, 
As desired, this yields an $L$-coloring $\psi_t$ that restricts to $\psi'_t$. 
Finally, if $\psi_t(v)\ne \vph_2(v)$, then we recolor $v$.
\end{proof}

Lemma~\ref{degen-lem} has a number of useful corollaries.

\begin{cor}
\label{degen-cor1}
Let $G$ be a connected graph and $L$ be a list-assignment for $G$.  If there
exists a vertex order $\sigma$ such that each vertex is preceded in $\sigma$ by
fewer than $|L(v)|$ of its neighbors, then $G$ is $L$-swappable.  In particular,
this is true if $|L(v)|\ge d(v)$ for all $v\in V(G)$ and there exists some
vertex $w$ such that $|L(w)|>d(w)$.
\end{cor}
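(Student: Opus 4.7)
My plan is to prove the first statement by induction on $|V(G)|$ using Lemma~\ref{degen-lem}, and then derive the ``in particular'' claim by exhibiting a suitable order $\sigma$ produced from a BFS.

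For the main statement, suppose $\sigma = v_1, \ldots, v_n$ is the given order. The base case $n=1$ is immediate. In the inductive step I will take $v := v_n$; since all of $v$'s neighbors precede it in $\sigma$, the hypothesis gives $d(v) < |L(v)|$. The prefix $v_1, \ldots, v_{n-1}$, restricted to each connected component of $G - v$, is a valid order for that component. By induction each component of $G-v$ is $L$-swappable, so $G - v$ itself is $L$-swappable (an $L$-valid swap in one component does not interfere with another). Then Lemma~\ref{degen-lem}, applied to $G$ and $v$, yields that $G$ is $L$-swappable. The one subtlety here is that $G - v$ may be disconnected even though $G$ is connected, so the induction must be phrased for possibly-disconnected graphs; this is harmless because $L$-swappability splits across components.

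For the ``in particular'' claim, I will construct $\sigma$ from a BFS enumeration $u_1 = w, u_2, \ldots, u_n$ of $V(G)$, which exists since $G$ is connected. Each $u_i$ with $i \ge 2$ has a BFS-parent $u_{p(i)}$ with $p(i) < i$. I then set $\sigma := u_n, u_{n-1}, \ldots, u_1$. For $i \ge 2$, the BFS-parent of $u_i$ comes after $u_i$ in $\sigma$, so $u_i$ has at most $d(u_i) - 1$ neighbors preceding it in $\sigma$, and $d(u_i) - 1 < d(u_i) \le |L(u_i)|$ by hypothesis. For the final vertex $u_1 = w$, all $d(w)$ neighbors precede it, but $d(w) < |L(w)|$ by hypothesis. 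So $\sigma$ satisfies the condition of the main statement, which then yields the conclusion. I do not expect any genuine obstacle: this corollary is essentially ``iterate Lemma~\ref{degen-lem} along a well-chosen order''.
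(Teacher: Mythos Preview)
Your proposal is correct and follows essentially the same approach as the paper: induct on $|G|$, strip off the last vertex of $\sigma$, apply Lemma~\ref{degen-lem}, and for the second statement order vertices by non-increasing distance from $w$ (your reversed BFS achieves exactly this). You are in fact slightly more careful than the paper in noting that $G-v$ may be disconnected and handling the induction componentwise.
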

\begin{proof}
We use induction on $|G|$.  The base case, $|G|=1$, is trivial; so assume that
$|G|\ge 2$.  Let $v$ denote the final vertex in $\sigma$, let $G':=G-v$, let
$L'$ denote $L$ restricted to $G'$, and let $\sigma'$ denote $\sigma$ with $v$
deleted.  By the induction hypothesis, $G'$ is $L'$-swappable.  So, by
Lemma~\ref{degen-lem}, $G$ is $L$-swappable.  This proves the first statement.
The second statement follows from the first, ordering by non-increasing distance
from $w$, since each vertex $x$ other than $w$ is followed in the order by its
neighbor on a shortest $x,w$-path.
\end{proof}

Recall that a \Emph{degree assignment} for a graph $G$ is a list assignment $L$
such that $|L(v)|=d(v)$ for all $v\in V(G)$.  A graph is
\EmphE{degree-swappable}{6mm}
if it is $L$-swappable for every degree assignment $L$.  
For a graph $G$ and list-assignment $L$, we typically let $\LL$ denote the set
of all $L$-colorings of $G$.  For some $\LL'\subseteq \LL$, we say that $\LL'$
\emph{mixes} if every two $L$-colorings in $\LL'$ are $L$-equivalent.
For each $v\in V(G)$ and
each $\alpha\in L(v)$, let \emph{$\LL_{v,\alpha}$}\aaside{mixes
$\LL_{v,\alpha}$}{0mm} denote the set of all $L$-colorings
$\vph$ with $\vph(v)=\alpha$.  The focus of this section is to prove that
various graphs are degree-swappable. 

\begin{cor}
\label{degen-cor2}
Let $L$ be a degree assignment for $G$.  Fix $v\in V(G)$ such that $G-v$ is
connected, and fix $\alpha\in L(v)$.  If there exists $w\in N(v)$ such that
$\alpha\notin L(w)$, then $\LL_{v,\alpha}$ mixes.
\end{cor}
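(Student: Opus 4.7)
The plan is to work on $G-v$ with a modified list assignment that bakes in the constraint ``$v$ has color $\alpha$'' (namely, that no neighbor of $v$ may ever receive $\alpha$), invoke Corollary~\ref{degen-cor1} to get swappability there, and then lift each Kempe swap back to $G$ without ever disturbing $v$.

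First I would define a list assignment $L''$ on $G-v$ by $L''(u):=L(u)\setminus\{\alpha\}$ for $u\in N(v)$ and $L''(u):=L(u)$ otherwise. For $u\notin N(v)$ we have $|L''(u)|=|L(u)|=d_G(u)=d_{G-v}(u)$, while for $u\in N(v)$ we have $|L''(u)|\ge|L(u)|-1=d_{G-v}(u)$, with strict inequality when $\alpha\notin L(u)$. By hypothesis this occurs at $w$, so $|L''(w)|>d_{G-v}(w)$. Since $G-v$ is connected, Corollary~\ref{degen-cor1} yields that $G-v$ is $L''$-swappable.

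Now let $\vph_1,\vph_2\in\LL_{v,\alpha}$ and let $\vph'_1,\vph'_2$ be their restrictions to $G-v$. Properness of $\vph_1,\vph_2$ at $v$ forces $\vph'_j(u)\ne\alpha$ for every $u\in N(v)$, so $\vph'_1,\vph'_2$ are $L''$-colorings, and by $L''$-swappability there is a sequence $\psi'_0=\vph'_1,\psi'_1,\ldots,\psi'_t=\vph'_2$ of $L''$-colorings in which each $\psi'_i$ arises from $\psi'_{i-1}$ by a single $L''$-valid Kempe swap. Write $\widehat{\psi_i}$ for the extension of $\psi'_i$ to $G$ obtained by coloring $v$ with $\alpha$; since $\alpha$ is forbidden on all neighbors of $v$ under $L''$, each $\widehat{\psi_i}$ is a proper $L$-coloring.

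The key step, and the main obstacle, is to lift each swap: if $\psi'_i$ is obtained from $\psi'_{i-1}$ by an $\alpha_0,\beta_0$-swap at $u_0$, I must show the same swap is $L$-valid on $\widehat{\psi_{i-1}}$ and leaves $v$ colored $\alpha$. If $\alpha\notin\{\alpha_0,\beta_0\}$, then $v$ lies outside the $\alpha_0,\beta_0$-subgraph of $G$, so the $u_0$-component is identical in $G$ and $G-v$, and the lifted swap is trivially $L$-valid and fixes $v$. If instead $\alpha\in\{\alpha_0,\beta_0\}$, say $\alpha_0=\alpha$, then any neighbor $x$ of $v$ in the $u_0$-component of $G-v$ would currently have color $\beta_0$ (color $\alpha$ is ruled out by properness at $v$) and would be recolored to $\alpha$ by the swap; but $\alpha\notin L''(x)$, contradicting $L''$-validity. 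Hence no neighbor of $v$ lies in that component, so reinstating $v$ with its edges does not merge $v$'s $\alpha,\beta_0$-component with the $u_0$-component, and the swap again lifts while fixing $v$. Concatenating these lifts produces an $L$-equivalence from $\vph_1$ to $\vph_2$ entirely inside $\LL_{v,\alpha}$, so $\LL_{v,\alpha}$ mixes.
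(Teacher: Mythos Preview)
Your proof is correct and follows essentially the same approach as the paper's one-line argument (``color $v$ with $\alpha$ and apply the second statement of Corollary~\ref{degen-cor1} to $G-v$''), with the same modified list assignment $L''$ on $G-v$. The difference is only in level of detail: you carefully verify the lifting of each Kempe swap from $G-v$ back to $G$, a step the paper leaves implicit.
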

\begin{proof}
We simply color $v$ with $\alpha$ and apply the second statement of
Corollary~\ref{degen-cor1} to $G-v$.
\end{proof}

\begin{cor}
\label{degen-cor3}
Let $L$ be a degree assignment for $G$.  Fix $v,w,x\in V(G)$ such that $v,x\in
N(w)$ and $vx\notin E(G)$ and $G-v-x$ is connected.  If there exists $\alpha\in
L(v)\cap L(x)$, then $\LL_{v,\alpha}\cap \LL_{x,\alpha}$ mixes.
\end{cor}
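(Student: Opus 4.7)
The plan is to freeze the colors at $v$ and $x$ by setting them both to $\alpha$, and then reduce to an application of Corollary~\ref{degen-cor1} on $G-v-x$ with an amputated list assignment. Concretely, define $L'$ on $V(G-v-x)$ by $L'(u):=L(u)\setminus\{\alpha\}$ for each $u\in N_G(v)\cup N_G(x)$, and $L'(u):=L(u)$ otherwise. Every $\vph\in\LL_{v,\alpha}\cap\LL_{x,\alpha}$ restricts to a proper $L'$-coloring of $G-v-x$ (since no neighbor of $v$ or $x$ can be colored $\alpha$), and conversely every $L'$-coloring of $G-v-x$ extends uniquely, by coloring $v$ and $x$ with $\alpha$, to a member of $\LL_{v,\alpha}\cap\LL_{x,\alpha}$.

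Next I would verify the hypotheses of the second statement of Corollary~\ref{degen-cor1} for the pair $(G-v-x,L')$. For each $u\in V(G-v-x)$, the list shrinks by at most $1$ while the degree shrinks by $|\{v,x\}\cap N_G(u)|\in\{0,1,2\}$; since $L$ is a degree assignment, $|L'(u)|\ge d_{G-v-x}(u)$ for every $u$. The common neighbor $w$ loses two degrees but at most one list color, so $|L'(w)|>d_{G-v-x}(w)$. Because $G-v-x$ is connected by hypothesis, Corollary~\ref{degen-cor1} yields that $G-v-x$ is $L'$-swappable.

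The remaining step, and the one where the real work sits, is to lift each $L'$-valid Kempe swap in $G-v-x$ to an $L$-valid Kempe swap in $G$ that leaves $v$ and $x$ colored $\alpha$. Consider an $L'$-valid $\beta,\gamma$-swap on a component $C$ of the $\beta,\gamma$-subgraph of $G-v-x$. If $\alpha\notin\{\beta,\gamma\}$, then $v$ and $x$ do not appear in the $\beta,\gamma$-subgraph of $G$ at all, so $C$ is still a component there and the swap is plainly $L$-valid in $G$. Otherwise $\alpha\in\{\beta,\gamma\}$, say $\gamma=\alpha$: then $L'$-validity forces every vertex of $C$ to lie outside $N(v)\cup N(x)$, since each $\alpha$-colored $u\in C$ already has $\alpha\in L'(u)$ (hence $u\notin N(v)\cup N(x)$) and each $\beta$-colored $u\in C$ must receive $\alpha\in L'(u)$ after the swap (same conclusion). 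Thus $C$ has no edge to $v$ or $x$ in $G$ and remains a component of $G$'s $\alpha,\beta$-subgraph, so the lifted swap is $L$-valid and fixes $v$ and $x$. Iterating this lift along an $L'$-equivalence chain between the restrictions of any two colorings in $\LL_{v,\alpha}\cap\LL_{x,\alpha}$ produces the desired $L$-equivalence. The only subtle point, and thus the main obstacle, is precisely this bookkeeping: ensuring that a Kempe component $C$ downstairs cannot silently attach to $v$ or $x$ upstairs and thereby alter their color; the observation that saves the day is that $L'$-validity rules this out in the one case ($\alpha$ a swap color) where it could occur.
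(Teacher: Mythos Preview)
Your proof is correct and takes essentially the same approach as the paper's one-line proof (``color $v$ and $x$ with $\alpha$ and apply Corollary~\ref{degen-cor1} to $G-v-x$''); you have simply spelled out the details that the paper leaves implicit, namely the choice of the reduced list assignment $L'$ and the verification that $L'$-valid Kempe swaps in $G-v-x$ lift to $L$-valid swaps in $G$ without disturbing $v$ or $x$.
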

\begin{proof}
We simply color $v$ and $x$ with $\alpha$ and then apply
Corollary~\ref{degen-cor1} to $G-v-x$.
\end{proof}

\begin{lem}
Let $G$ be a graph with an edge $vw$ and a degree assignment $L$.  If $G-vw$ is
degree-choosable and connected and $|L(v)\cap L(w)|\le 1$, then $G$ is
$L$-swappable.
\label{big-intersection}
\end{lem}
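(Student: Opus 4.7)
The plan is first to apply Corollary~\ref{degen-cor1} to $G-vw$ with the list assignment $L$. Since $G-vw$ is connected, every vertex $u$ satisfies $|L(u)|=d_G(u)\ge d_{G-vw}(u)$, and $v$ satisfies the strict inequality $|L(v)|=d_G(v)=d_{G-vw}(v)+1$; hence $G-vw$ is $L$-swappable. The rest of the proof lifts this $L$-swappability from $G-vw$ to $G$, using the bound $|L(v)\cap L(w)|\le 1$ to control interactions at the deleted edge.

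If $L(v)\cap L(w)=\emptyset$, the lift is essentially immediate. Every $L$-coloring $\psi$ of $G-vw$ has $\psi(v)\in L(v)$ and $\psi(w)\in L(w)$ drawn from disjoint lists, so $\psi(v)\ne\psi(w)$ and $\psi$ is also an $L$-coloring of $G$. Moreover, any $L$-valid $(\alpha,\beta)$-Kempe swap requires $\{\alpha,\beta\}\subseteq L(u)$ for every vertex $u$ in its component, which rules out any component containing both $v$ and $w$ (as $\{\alpha,\beta\}\subseteq L(v)\cap L(w)=\emptyset$ is impossible). Thus the $(\alpha,\beta)$-component structure is identical in $G$ and $G-vw$ for every $L$-valid swap, and the swap sequence witnessing $L$-swappability of $G-vw$ works verbatim in $G$.

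Now assume $L(v)\cap L(w)=\{\mu\}$. Because $G-vw$ is connected on at least two vertices, $d_G(v),d_G(w)\ge 2$, so $L(v)\setminus L(w)$ and $L(w)\setminus L(v)$ are nonempty; fix $\alpha_v\in L(v)\setminus L(w)$ and $\alpha_w\in L(w)\setminus L(v)$. I plan to show every $L$-coloring $\vph$ of $G$ is $L$-equivalent in $G$ to a canonical $\vph^*$ with $\vph^*(v)=\alpha_v$ and $\vph^*(w)=\alpha_w$; transitivity then yields $L$-swappability of $G$. For the ``mixing of canonical colorings'' step, I pin $v=\alpha_v$ and $w=\alpha_w$ and apply Corollary~\ref{degen-cor1} componentwise to $G-v-w$ with the list assignment $L'$ obtained by deleting $\alpha_v$ from $L(x)$ for each $x\in N(v)$ and $\alpha_w$ from $L(x)$ for each $x\in N(w)$; each component of $G-v-w$ meets $N(v)\cup N(w)$ by connectedness of $G-vw$, and at least one such neighbor provides the strict-slack required by Corollary~\ref{degen-cor1} (varying $\alpha_v$ and $\alpha_w$ within their nonempty differences if necessary).

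The main obstacle is the normalization itself. A naive attempt uses the $L$-swappability of $G-vw$ to produce a sequence from $\vph$ to $\vph^*$, but this sequence may pass through the forbidden configuration $\psi(v)=\psi(w)=\mu$ (not an $L$-coloring of $G$) or include a step that is $L$-valid in $G-vw$ yet not in $G$; the latter occurs precisely when the step is an $(\alpha,\mu)$-swap whose Kempe component contains exactly one of $v,w$ while the other already has color $\mu$. I expect to handle each such bad step by a local detour: before executing it, first perform a $G$-valid Kempe swap that moves $\mu$ off the pair $\{v,w\}$. For instance, when $\psi(w)=\mu$, swapping at the $(\mu,\gamma)$-component of $w$ for some $\gamma\in L(w)\setminus L(v)$ keeps the component disjoint from $v$, because $\gamma\notin L(v)$ and $\psi(v)\ne\mu$; the original swap is then unobstructed. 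Ensuring that such a detour is always available -- and that its Kempe component itself admits an $L$-valid swap in $G$ -- is the technical heart of the proof.
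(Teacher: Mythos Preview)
Your disjoint-lists paragraph is correct and is in fact the whole engine of the paper's proof; you just do not reuse it for the case $|L(v)\cap L(w)|=1$. The paper argues as follows. Write $L(v)\cap L(w)\subseteq\{\mu\}$. Form $L'$ from $L$ by deleting $\mu$ from $L(v)$, and let $\LL_1$ be the set of $L'$-colorings of $G-vw$ (equivalently, the $L$-colorings of $G$ with $\vph(v)\ne\mu$). Since $L'(v)\cap L'(w)=\emptyset$, your own disjoint-lists argument---combined with Corollary~\ref{degen-cor1} applied to $G-vw$, which has strict slack at $w$---shows that $\LL_1$ mixes. Symmetrically, deleting $\mu$ from $L(w)$ gives $\LL_2$, which also mixes. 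Every $L$-coloring of $G$ lies in $\LL_1\cup\LL_2$. Finally, and this is where degree-choosability enters, deleting $\mu$ from both lists gives a degree assignment $L'''$ for $G-vw$; since $G-vw$ is degree-choosable it has an $L'''$-coloring, which lies in $\LL_1\cap\LL_2$. Hence $\LL$ mixes.

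Your normalization-plus-detour plan, by contrast, never invokes degree-choosability of $G-vw$. This is a genuine gap, not a cosmetic omission: without that hypothesis the lemma is false (take $G=K_3$ with lists $\{1,2\},\{2,3\},\{1,3\}$; then $G-vw$ is a path, hence not degree-choosable, and the two $L$-colorings of $G$ admit no $L$-valid swaps at all). So any correct argument must use degree-choosability somewhere; in your outline the natural place would be the existence of a canonical $\vph^*$, which you do not justify, and which can indeed fail for a prescribed pair $(\alpha_v,\alpha_w)$. Beyond this, the detour step has the hole you yourself flag: nothing guarantees the $(\mu,\gamma)$-swap at $w$ is $L$-valid, since interior vertices of that Kempe chain need not have both $\mu$ and $\gamma$ in their lists. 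And once a detour alters the coloring, the remaining swaps of the $G-vw$ sequence no longer act on the current coloring; you give no resynchronization mechanism. The two-class partition above sidesteps all of these difficulties in a few lines.
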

\begin{proof}
Suppose that $L(v)\cap L(w)\subseteq\{\alpha\}$, for some color $\alpha$.
Form $L'$ from $L$ by removing $\alpha$ from $L(v)$.
Form $L''$ from $L$ by removing $\alpha$ from $L(w)$.
Let $\LL_1$ denote the set of all $L'$-colorings of
$G-vw$ and let $\LL_2$ denote the set of all $L''$-colorings of $G-vw$.
Note that $\LL_1$ mixes, by the second statement of Corollary~\ref{degen-cor1},
since $|L'(w)|>d_{G-vw}(w)$.
Similarly, $\LL_2$ mixes.  
Form $L'''$ from $L$ by removing $\alpha$ from both $L(v)$ and $L(w)$.
By hypothesis $G-vw$ is degree-choosable, so $G-vw$ has an $L'''$-coloring $\vph$.
But $\vph$ is both an $L'$-coloring and an $L''$-coloring.  So $\LL_1$ mixes
with $\LL_2$.  Note that every $L$-coloring is either an $\LL_1$-coloring or an
$\LL_2$-coloring (or both).  Thus, $G$ is $L$-swappable.
\end{proof}

To prove that a graph $G$ is $L$-swappable, we often follow the same approach as
we used to prove the previous lemma.  We let $\LL$ denote the set of all
$L$-colorings of $G$, and we find sets of $L$-colorings
$\LL_1,\ldots,\LL_t$ such that $\LL=\bigcup_{i=1}^t\LL_i$.  We show that each
set $\LL_i$ mixes.  Finally, for each $i>1$, we show there exists $j<i$ such that
$\LL_i\cap \LL_j\ne \emptyset$.  This proves that $\LL$ mixes, as desired.
Often, we choose $\LL_i$ to be $\LL_{v,\alpha}$ (for some $v$ and $\alpha\in
L(v)$) or some other set that we know mixes by
Corollaries~\ref{degen-cor1}--\ref{degen-cor3}. 

The goal of this section is to prove the following lemma, which we stated in the
introduction.  

\begin{NoHyper}
\setcounter{lemi}{2}
\begin{lemi}
A graph $G$ is degree-swappable whenever it is the line graph of either (1) a
bipartite barbell (possibly short) or (2) a bipartite $\theta$-graph other
than $K_{2,3}$ or (3) $K_{2,4}$.
\end{lemi}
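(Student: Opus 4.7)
The plan is to prove the three cases separately, using Lemma~\ref{big-intersection} and Corollaries~\ref{degen-cor1}--\ref{degen-cor3} as the primary tools. For case~(3), I would identify $L(K_{2,4})$ with $K_4 \dbox K_2$, the $8$-vertex $4$-regular graph consisting of two copies of $K_4$ joined by a perfect matching. First I would check that deleting any single edge of $K_4 \dbox K_2$ still leaves a graph containing a $4$-cycle, and hence a degree-choosable graph. Consequently, whenever $|L(v) \cap L(w)| \le 1$ for some edge $vw$, Lemma~\ref{big-intersection} applies directly. In the residual case where every adjacent pair has large list overlap, I would pivot on a vertex $v$, split $\LL$ into the sets $\LL_{v,\alpha}$, and argue by explicit Kempe-chain analysis using the symmetry between the two $K_4$'s; the corollaries provide mixing within $\LL_{v,\alpha}$ whenever some neighbor of $v$ has a list avoiding $\alpha$, and in the fully symmetric ``constant list'' subcase a short direct argument on the $216$ colorings of $K_4\dbox K_2$ finishes the job.

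For cases (1) and~(2), the line graphs share a common structure: two ``dense cores'' (one $K_4$ for a short barbell, or two triangles for a long barbell or a $\theta$-graph) joined by paths whose interior vertices have degree~$2$ in the line graph. My plan is strong induction on $|V(G)|$. The small base cases---the line graph of a short barbell with two $C_4$ cycles, and the smallest bipartite $\theta$-graphs other than $K_{2,3}$---are handled by the same pivot-and-partition approach as above. In the inductive step, I would locate an induced subgraph $H$ of $G$ containing a small even cycle (so that $H$ is degree-choosable and, by inspection, degree-swappable), chosen so that $G - H$ is the line graph of a smaller member of the same family. Then Lemma~\ref{CM-lem} combined with the inductive hypothesis yields that $G$ is degree-swappable.

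The hardest step is the ``full-overlap'' case in~(3) and the careful choice of $H$ in~(1) and~(2). In particular, whenever the connecting path of the barbell (or the shortest path in a bipartite $\theta$-graph) is very short, $G$ does not decompose cleanly, so such configurations must be handled directly rather than via induction. The short list sizes (only $2$ colors for each interior vertex of degree~$2$) should make these direct verifications tractable: one can usually find a degree-$2$ vertex whose list omits a chosen pivot color, making Corollary~\ref{degen-cor2} applicable and allowing the mixing sets to be glued together along the degree-$2$ portion of the line graph.
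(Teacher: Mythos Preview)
Your plan for case~(3) is close to the paper's: both identify the line graph of $K_{2,4}$ with $K_4\dbox K_2$ and proceed by case analysis on the lists, finishing the all-lists-equal subcase directly. The paper's split is on whether $L(v_i)\ne L(w_j)$ for some \emph{non-adjacent} pair rather than via Lemma~\ref{big-intersection}, but your outline is workable here.

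The inductive plan for cases~(1) and~(2), however, has a real gap. To invoke Lemma~\ref{CM-lem}, the removed subgraph $H$ must be both $f'$-choosable and $f'$-swappable, where $f'$ is the degree assignment on $H$. Containing an even cycle guarantees $f'$-choosability, but \emph{not} $f'$-swappability: the paper notes explicitly that no cycle is degree-swappable, and more generally that adding a single edge to a Gallai tree never yields a degree-swappable graph. So the natural small choices of $H$ (a $4$-cycle, a cycle with one chord) fail the swappability hypothesis. On the other side, the only way to delete vertices from the line graph of a barbell or a prism and leave the line graph of a \emph{smaller} barbell or prism is to remove interior degree-$2$ vertices from one of the connecting paths; but those vertices induce a path, a Gallai tree, which fails $f'$-choosability. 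There is no evident $H$ satisfying both hypotheses of Lemma~\ref{CM-lem} simultaneously, and your proposal does not supply one. The difficulty is not confined to ``very short'' configurations as you suggest; it obstructs the inductive step at every size.

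The paper avoids induction entirely. For barbells it uses Lemma~\ref{big-intersection} to force a common list $\{a,b\}$ on each cycle's interior, then shows directly that each $\LL_{v_1v_2,\alpha}$ mixes and that these sets are linked through colorings of the second cycle. For bipartite $\theta$-graphs it splits into two subcases: one path of length~$1$ (a short direct argument with a single $4$-vertex $z$), and all paths of length at least~$2$, where the line graph is a prism and the proof is a case analysis on $|L(v_1)\cap L(v_2)\cap L(v_3)|$ and then on how many of the three prism paths have length exceeding~$1$.
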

\end{NoHyper}

We split the proof of this lemma into three parts.
In Lemma~\ref{barbell-swappable-lem}, we handle (1).
In Lemma~\ref{K24-lem}, we handle (3).
And in Lemmas~\ref{short-theta-lem} and~\ref{prism-lem}, we handle (2).

\begin{lem}
The line graph of every bipartite barbell is degree-swappable.
\label{barbell-swappable-lem}
\end{lem}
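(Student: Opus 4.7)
My plan treats the short and non-short barbells separately, and in both cases follows the partition-and-bridge strategy sketched just after Lemma \ref{big-intersection}: I partition the set $\LL$ of $L$-colorings of $H := L(G)$ into subsets, show each mixes via Corollaries \ref{degen-cor1}--\ref{degen-cor3}, and bridge adjacent subsets by Kempe swaps at a well-chosen pivot.

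For the short barbell, let $v$ be the shared vertex of the two even cycles, so $d_G(v)=4$. The four edges of $G$ at $v$ --- call them $a_1, a_2, b_1, b_2$ --- induce a $K_4$ in $H$ with list size $4$; every other vertex of $H$ has degree and list size $2$, and together they form two internally-degree-$2$ paths (one from $a_1$ to $a_2$, one from $b_1$ to $b_2$) corresponding to the two cycles. I would pivot on $a_1$ and partition $\LL$ into subsets $\LL_{a_1,\alpha}$ for $\alpha \in L(a_1)$. To show $\LL_{a_1,\alpha}$ mixes, I fix $a_1 \mapsto \alpha$, delete $a_1$, and reduce the lists on $N(a_1) = \{a_2, b_1, b_2, e_2\}$ by $\alpha$ (where $e_2$ is the immediate neighbor of $a_1$ on the $a$-path); provided $\alpha \notin L(x)$ for some $x \in N(a_1)$ --- for instance, $\alpha \notin L(e_2)$ makes $e_2$ a strict starting vertex (degree $1$, effective list size $2$) for the distance-ordering of Corollary \ref{degen-cor1}'s second statement. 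To bridge $\LL_{a_1,\alpha}$ and $\LL_{a_1,\beta}$, I would locate a $\varphi \in \LL_{a_1,\alpha}$ whose $\alpha,\beta$-Kempe component at $a_1$ lies entirely inside vertices whose lists contain both colors, making the swap $L$-valid.

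For the non-short barbell, the two cycle-path junction vertices of $G$ have degree $3$, so $H$ contains two triangles (one at each junction) with six vertices of list size $3$, plus one degree-$4$ vertex when the connecting path has length exactly $1$. The same partition-and-bridge plan applies, pivoting on one triangle vertex (or on the degree-$4$ vertex in the $\ell=1$ case); the structure of the proof runs parallel to the short case.

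The main obstacle is the delicate subcase where the pivot color $\alpha$ lies in the list of every neighbor of the pivot: then, after the reduction, every list size in $H - a_1$ equals its degree exactly, and no strict starting vertex exists for Corollary \ref{degen-cor1}'s second statement. My plan to resolve this is iterative peeling --- $e_2$ has reduced list size exactly $1$, forcing its color, so I fix it and repeat the analysis on $H - a_1 - e_2$. When the iteration stalls, I would fall back to Corollary \ref{degen-cor3} (fixing two non-adjacent vertices with a common color) or to Lemma \ref{big-intersection} applied to an interior path edge of $H$ (whose removal keeps $H$ $2$-connected and non-Gallai, hence degree-choosable). The bipartiteness of $G$, via the even cycle lengths, is what ensures these propagations close up consistently rather than running into a parity conflict.
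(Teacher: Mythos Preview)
Your partition-and-bridge framework is right, but the execution has a real gap in exactly the place you flag as the ``main obstacle,'' and your proposed fallbacks do not close it.  The paper avoids this obstacle entirely by applying Lemma~\ref{big-intersection} \emph{first}, not as a last resort: for each interior edge $v_iv_{i+1}$ of a cycle, $H-v_iv_{i+1}$ is degree-choosable, so we may assume $|L(v_iv_{i+1})\cap L(v_{i+1}v_{i+2})|\ge 2$, which forces all the size-$2$ lists along each cycle to be identical, say $\{a,b\}$, and forces $L(v_1v_2)=\{a,b,c\}$.  Now $\LL_{v_1v_2,c}$ mixes immediately by Corollary~\ref{degen-cor2} (since $c\notin L(v_2v_3)$), and $\LL_{v_1v_2,a}$, $\LL_{v_1v_2,b}$ mix because the color on $v_1v_2$ forces the colors on the entire cycle by parity, after which Corollary~\ref{degen-cor3} applies to the two endpoints.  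Your iterative peeling is groping toward this forcing, but without the preliminary list reduction you cannot guarantee the propagation closes up, and you cannot conclude that the residual graph (which still contains an even cycle with a degree assignment) is swappable---recall the paper's own warning that even cycles are \emph{not} degree-swappable.

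Your bridging strategy is also different and underspecified.  You propose to connect $\LL_{a_1,\alpha}$ and $\LL_{a_1,\beta}$ by locating a coloring whose $\alpha,\beta$-Kempe component at $a_1$ is confined to vertices with $\{\alpha,\beta\}\subseteq L$; it is not clear such a coloring always exists, and you give no construction.  The paper instead uses a \emph{second} pivot on the other cycle: it shows each $\LL_{w_1w_2,\beta}$ mixes by the symmetric argument, and then bridges by exhibiting, for every pair $(\alpha,\beta)$, a coloring in $\LL_{v_1v_2,\alpha}\cap\LL_{w_1w_2,\beta}$ (color the two pivots, then greedily extend toward the vertex guaranteed to have slack).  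Since the classes $\LL_{v_1v_2,\alpha}$ already cover $\LL$, this two-pivot intersection argument connects everything.  For the short barbell the same idea works with the minor adjustment that the two pivots are now adjacent, so one picks $\beta\in L(w_1w_2)\setminus\{\alpha_1,\alpha_2\}$.
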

\begin{proof}
Consider a (non-short) barbell $G$ that consists of an even cycle
$v_1\cdots v_{2s}$, another even cycle $w_1\cdots w_{2t}$, and a
$v_1,w_1$-path $P$ (possibly of length 1).  Let $H$ be the line graph of $G$
and let $L$ be a degree assignment for $H$.  We will show that $H$ is
$L$-swappable.  Note that $d_H(v_1v_2)=d_H(v_{2s}v_{1})=
d_H(w_1w_2)=d_H(w_{2t}w_{1})=3$ and $d_H(x)=2$ for all other $x$ of the form
$v_iv_{i+1}$ or $w_jw_{j+1}$.
Denote $L(v_2v_3)$ by $\{a,b\}$.  By Lemma~\ref{big-intersection}, we assume
that $L(v_iv_{i+1})=\{a,b\}$ for all $i\in \{2,\ldots,2t-1\}$.  Further,
$L(v_1v_2)=\{a,b,c\}$ for some color $c$.  Note that $\LL_{v_1v_2,c}$ mixes,
by Corollary~\ref{degen-cor2}.  If $\vph(v_1v_2)=a$, then
$\vph(v_{2i}v_{2i+1})=b$ and $\vph(v_{2i+1}v_{2i+2})=a$ for all
$i\in[s-1]$.  We first color $v_1v_2$ and $v_{2s-1}v_{2s}$ with $a$, and next
apply Corollary~\ref{degen-cor3} to $H-\{v_2v_3,\ldots,v_{2s-2}v_{2s-1}\}$.
Thus,
%shows that 
$\LL_{v_1v_2,a}$ mixes.  Similarly, $\LL_{v_1v_2,b}$ mixes.
Denote $L(w_1w_2)$ by $\{a',b',c'\}$.  By symmetry, each of 
$\LL_{w_1w_2,a'}$, $\LL_{w_1w_2,b'}$, and $\LL_{w_1w_2,c'}$ mixes.

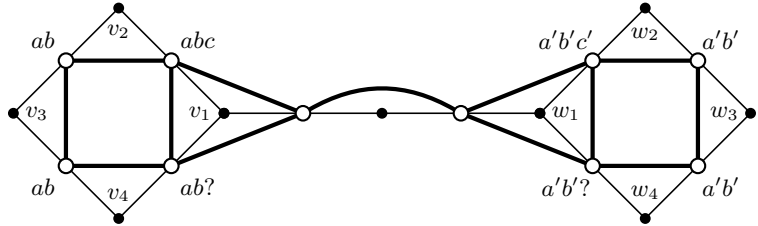
\begin{figure}[!h]
\centering
\begin{tikzpicture}[scale=1.4]
\tikzstyle{bStyle}=[shape = circle, minimum size = 3.5pt, inner sep = 1pt,
outer sep = 0pt, draw, fill=black]
\tikzstyle{wStyle}=[shape = circle, minimum size = 5.5pt, inner sep = 1pt,
outer sep = 0pt, draw, fill=white]
\tikzstyle{lStyle}=[shape = circle, draw=none, fill=none]
\tikzset{every node/.style=bStyle}
\draw[white] (0,-1.5) -- (.1,-1.5);

\draw[semithick] (0,0) node[label={[xshift=.03cm]right:\footnotesize{$w_1$}}] (w1) {}
--++ (1,1) node[label={[yshift=-.03cm]below:\footnotesize{$w_2$}}] (w2) {}
--++ (1,-1) node[label={[xshift=-.03cm]left:\footnotesize{$w_3$}}] (w3) {}
--++ (-1,-1) node[label={[yshift=.03cm]above:\footnotesize{$w_4$}}] (w4) {}
--++ (-1,1) node (w5) {}
--++ (-1.5,0) node (w6) {}
--++ (-1.5,0) node[label={[xshift=-.03cm]left:\footnotesize{$v_1$}}] (v1) {}
--++ (-1,1) node[label={[yshift=-.03cm]below:\footnotesize{$v_2$}}] (v2) {}
--++ (-1,-1) node[label={[xshift=.03cm]right:\footnotesize{$v_3$}}] (v3) {}
--++ (1,-1) node[label={[yshift=.03cm]above:\footnotesize{$v_4$}}] (v4) {}
--++ (1,1) node {};

\begin{scope}[thick]
\foreach \i/\j in {1/2, 2/3, 3/4, 4/1}
{
\draw node[wStyle] (x\i\j) at (barycentric cs:v\i=1,v\j=1) {};
\draw node[wStyle] (y\i\j) at (barycentric cs:w\i=1,w\j=1) {};
}

\draw node[wStyle] (z1) at (barycentric cs:v1=1,w6=1) {};
\draw node[wStyle] (z2) at (barycentric cs:w5=1,w6=1) {};

\foreach \i/\j in {x12/x23, x23/x34, x34/x41, x41/x12, x12/z1, x41/z1, 
y12/y23, y23/y34, y34/y41, y41/y12, y41/z2, y12/z2} %, y12/z2}
\draw[ultra thick] (\i) -- (\j);
\draw[ultra thick] (z1) edge[bend left] (z2);

\draw (x12) ++ (.25,.20) node[lStyle] {\footnotesize{$abc$}};
\draw (x23) ++ (-.22,.20) node[lStyle] {\footnotesize{$ab$}};
\draw (x34) ++ (-.22,-.20) node[lStyle] {\footnotesize{$ab$}};
\draw (x41) ++ (.25,-.20) node[lStyle] {\footnotesize{$ab?$}};

\draw (y12) ++ (-.25,.20) node[lStyle] {\footnotesize{$a'b'c'$}};
\draw (y23) ++ (.22,.20) node[lStyle] {\footnotesize{$a'b'$}};
\draw (y34) ++ (.22,-.20) node[lStyle] {\footnotesize{$a'b'$}};
\draw (y41) ++ (-.25,-.20) node[lStyle] {\footnotesize{$a'b'?$}};

\end{scope}
\end{tikzpicture}

\caption{A non-short barbell, its line graph, and (most of) a degree-assignment for the
line graph. }
\end{figure}

Finally, if
$\alpha\in L(v_1v_2)$, $\beta\in L(w_1w_2)$, %and $\alpha\ne\beta$, 
then $\LL_{v_1v_2,\alpha}\cap\LL_{w_1w_2,\beta}\ne \emptyset$, as follows.  If
$\alpha=c$, then we color $v_1v_2$ with $\alpha$, color $w_1w_2$ with $\beta$,
and color greedily in order of non-increasing distance from $v_2v_3$ (in the
uncolored subgraph), since
$\alpha\notin L(v_2v_3)$.  
%A similar approach works if $\beta=c'$.  
So instead assume that $\alpha\in\{a,b\}$. % and $\beta\in \{a',b'\}$.  
Now we color $v_1v_2,v_2v_3,\ldots,v_{2s-1}v_{2s}$, alternating $a$ and $b$, color $w_1w_2$
with $\beta$, and color greedily in order of non-increasing distance from
$v_{2s}v_1$.  So $\LL_{v_1v_2,\alpha}\cap\LL_{w_1w_2,\beta}\ne\emptyset$. 
Thus, $\LL$ mixes.

Now consider a short barbell.  The proof is similar.
The main differences are that $d_H(v_1v_2)=d_H(w_1w_2)=4$ and that $v_1v_2$ and
$w_1w_2$ are adjacent.  However, given distinct $\alpha_1,\alpha_2\in
L(v_1v_2)$,
there exists $\beta\in L(w_1w_2)\setminus\{\alpha_1,\alpha_2\}$.  Further,
$\LL_{v_1v_2,\alpha_1}\cap \LL_{w_1w_2,\beta}\ne \emptyset$ and
$\LL_{v_1v_2,\alpha_2}\cap \LL_{w_1w_2,\beta}\ne \emptyset$.  Thus,
$\bigcup_{\alpha\in L(v_1v_2)}\LL_{v_1v_2,\alpha}$ mixes.  But this includes all
of $\LL$.  So, $H$ is degree-swappable, as desired.
\end{proof}

Observe that the line graph of $K_{2,4}$ is $K_4\dbox K_2$.  Thus, the next
lemma proves (3) in Lemma~\ref{reduc-lem}.

\begin{lem}
The graph $K_4\dbox K_2$ is 4-swappable.
\label{K24-lem}
\end{lem}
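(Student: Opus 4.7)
The plan follows the template used in Lemma~\ref{barbell-swappable-lem}: partition the set $\LL$ of all $L$-colorings into classes that provably mix, and then link different classes via a common coloring. Let $G:=K_4\dbox K_2$ and label the two $K_4$'s as $u_1,\ldots,u_4$ and $w_1,\ldots,w_4$, with matching edges $u_iw_i$, so every vertex has degree $4=|L(v)|$.

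The first step is a list-intersection reduction via Lemma~\ref{big-intersection}. If some edge $vw$ satisfies $|L(v)\cap L(w)|\le 1$, then $G-vw$ is 2-connected and is not a Gallai tree (it is neither a single odd cycle nor a clique), and hence is degree-choosable by Erd\H{o}s--Rubin--Taylor; Lemma~\ref{big-intersection} then finishes the argument. So I may assume $|L(v)\cap L(w)|\ge 2$ for every edge of $G$.

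Next, fix the vertex $u_1$ and, for each $\alpha\in L(u_1)$, aim to show that $\LL_{u_1,\alpha}$ mixes. The easy subcase is when some neighbor $z$ of $u_1$ has $\alpha\notin L(z)$: then $\LL_{u_1,\alpha}$ mixes by Corollary~\ref{degen-cor2}, since $G-u_1$ is connected. The hard subcase is when $\alpha$ lies in the lists of all four neighbors $u_2,u_3,u_4,w_1$. After coloring $u_1:=\alpha$, the restricted assignment $L'$ on $G-u_1$ gives degrees $3,3,3,3,4,4,4$ at $u_2,u_3,u_4,w_1,w_2,w_3,w_4$ and list sizes at least $3,3,3,3,4,4,4$. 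Combining a carefully chosen vertex order with Corollary~\ref{degen-cor1} should settle most instances; the residual instances can be handled either by applying Lemma~\ref{big-intersection} inside $G-u_1$ (where some edge now has small list intersection), or by direct case analysis exploiting the symmetry of $K_4\dbox K_2$.

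Finally, link the (at most) four classes $\LL_{u_1,\alpha}$ with $\alpha\in L(u_1)$. For distinct $\alpha,\beta\in L(u_1)$, I claim there exists a coloring $\varphi\in\LL_{u_1,\alpha}$ in which no neighbor of $u_1$ uses $\beta$; then the $(\alpha,\beta)$-swap at $u_1$ is $L$-valid and sends $\varphi$ into $\LL_{u_1,\beta}$, linking the two classes. To produce such a $\varphi$, delete $\beta$ from the list of each $z\in N(u_1)$; each such $z$ has degree $3$ in $G-u_1$ and retains a list of size $\ge 3$, and the modified assignment is not a degree assignment on a Gallai tree, so a proper coloring of $G-u_1$ exists. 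Since $\LL=\bigcup_{\alpha\in L(u_1)}\LL_{u_1,\alpha}$ and all four classes pairwise link, $\LL$ mixes and $G$ is $4$-swappable. I expect the bulk of the work (and the main obstacle) to lie in the hard subcase of the middle step, where one must tease apart the allowable list-intersection patterns along the edges at $u_1$.
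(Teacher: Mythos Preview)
Your linking step has a concrete error. You claim that for any $\alpha,\beta\in L(u_1)$ there is a coloring $\varphi\in\LL_{u_1,\alpha}$ in which no neighbor of $u_1$ uses $\beta$, arguing that after deleting $\beta$ from each $L(z)$ with $z\in N(u_1)$ one still has lists of size $\ge 3$ on $G-u_1$. But you must also delete $\alpha$ from each such $L(z)$ (since $\varphi(u_1)=\alpha$), leaving only $\ge 2$ colors for each of $u_2,u_3,u_4,w_1$, while their degrees in $G-u_1$ are $3$. When all eight lists equal $\{1,2,3,4\}$ and $\alpha=1$, $\beta=2$, this forces the triangle $u_2u_3u_4$ to be properly colored from $\{3,4\}$, which is impossible. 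So no such $\varphi$ exists, and your linking argument breaks exactly in the case the paper isolates and treats separately (all lists equal).

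Your middle step is also not really a plan. In the ``hard subcase'' you are reduced to showing that $G-u_1$ is $L'$-swappable where $L'$ is a \emph{degree} assignment on the $7$-vertex graph $G-u_1$. Corollary~\ref{degen-cor1} does not apply (no vertex has slack), and invoking Lemma~\ref{big-intersection} inside $G-u_1$ or ``direct case analysis'' is not a reduction: establishing degree-swappability of $G-u_1$ is a task of essentially the same shape and difficulty as the original lemma. The paper sidesteps both issues with a different decomposition. Rather than fixing the color of a single vertex, it works with \emph{pairs} of non-adjacent vertices receiving the same color (via Corollary~\ref{degen-cor3}): first it shows $\LL$ mixes whenever $L(v_i)\ne L(w_j)$ for some $i\ne j$, and then, once all lists coincide, it covers $\LL$ by the sets $D_{i,j}=\bigcup_{\alpha}(\LL_{v_i,\alpha}\cap\LL_{w_j,\alpha})$ and links those.
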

\begin{proof}
Let $G:=K_4\dbox K_2$ and let $L$ be a 4-assignment for $G$.  We denote $V(G)$
by $\{v_1,v_2,v_3,v_4,$ $w_1,w_2,w_3,w_4\}$ such that $E(G)=\{v_iw_i$ for all
$i\in [4]\}\cup\{v_iv_j$ for all distinct $i,j\in [4]\}\cup \{w_iw_j$ for
all distinct $i,j\in [4]\}$; see Figure~\ref{K4K2-fig}.  
It is convenient to note that $K_4\dbox K_2$ is 4-connected; given distinct
$i,j\in[4]$, it is easy to find 4 internally-disjoint $v_i,w_j$-paths.
Thus, whenever we color at most 3 vertices, the uncolored subgraph is connected.

Suppose there exist distinct $i,j\in[4]$ such that $L(v_i)\ne L(w_j)$.  
We will show that $\LL$ mixes.  By symmetry, assume that $i=1$ and $j=2$.  Let
$A:=L(v_1)\cap L(w_2)$, let $A_1:=L(w_1)\cap A$, let $A_2:=L(v_2)\cap A$,  
let $B_1:=L(w_1)\setminus A$, and let $B_2:=L(v_2)\setminus A$.  So
$L(w_1)=A_1\cup B_1$ and $L(v_2)=A_2\cup B_2$. 
For each $\alpha\in B_1$, the set $\LL_{w_1,\alpha}$ mixes, by
Corollary~\ref{degen-cor2}, since $\alpha\notin A$.  Similarly, for each
$\alpha\in B_2$, the set $\LL_{v_2,\alpha}$ mixes, again since $\alpha\notin A$.  
Furthermore, for every $\alpha_1\in B_1$ and $\alpha_2\in B_2$, there exists an
$L$-coloring $\vph$ with $\vph(w_1)=\alpha_1$ and $\vph(v_2)=\alpha_2$, since
$G-w_1-v_2$ is degree-choosable.  
Let $\CC_1:= \left(\bigcup_{\alpha_1\in B_1}\LL_{w_1,\alpha_1} \right)\cup
\left(\bigcup_{\alpha_2\in B_2}\LL_{v_2,\alpha_2}\right)$, and note that $\CC_1$
mixes.  For each $\alpha\in A_1\cap A_2$, the set $\LL_{w_1,\alpha}\cap\LL_{v_2,\alpha}$
mixes, by Corollary~\ref{degen-cor3} (however, possibly $A_1\cap A_2=\emptyset$).
Since $|L(v_1)\cup L(w_2)|\ge 5 > (|L(w_1)|+|L(v_2)|)/2$, there exists $\beta\in
L(v_1)\cup L(w_2)\setminus (L(v_2)\cap L(w_1))$.  By symmetry, assume that
$\beta\in L(v_1)$.  Now $\LL_{v_1,\beta}$ mixes.  Further, for each $\alpha\in
A_1\cap A_2$, 
note that $\LL_{w_1,\alpha}\cap\LL_{v_2,\alpha}\cap\LL_{v_1,\beta}\ne\emptyset$.
Let $\CC_2:=\bigcup_{\alpha\in A_1\cap A_2}(\LL_{w_1,\alpha}\cap
\LL_{v_2,\alpha})$ and note that $\CC_2$ mixes (again, if $A_1\cap
A_2=\emptyset$, then $\CC_2=\emptyset$, but the statements above and below
still hold, trivially).  Since $\beta\notin L(v_2)\cap L(w_1)$, there exists
$\gamma\in (B_1\cup B_2)\setminus \{\beta\}$.  If $\gamma\in B_1$, then 
$\LL_{v_1,\beta}\cap\LL_{w_1,\gamma}\ne\emptyset$ (since $G-v_1-w_1$ is
degree-choosable).  And if $\gamma\in B_2$, then 
$\LL_{v_1,\beta}\cap\LL_{v_2,\gamma}\ne\emptyset$.
Thus, $\CC_1\cup \CC_2$ mixes.

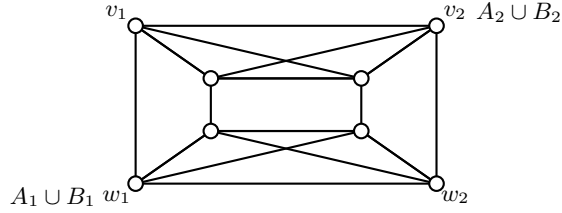
\begin{figure}[!h]
\centering
\begin{tikzpicture}[thick, scale=1.0, yscale=.7]
\tikzstyle{bStyle}=[shape = circle, minimum size = 3.5pt, inner sep = 1pt,
outer sep = 0pt, draw, fill=black]
\tikzstyle{wStyle}=[shape = circle, minimum size = 5.5pt, inner sep = 1pt,
outer sep = 0pt, draw, fill=white]
\tikzstyle{lStyle}=[shape = circle, draw=none, fill=none]
\tikzset{every node/.style=wStyle}

\draw (-1,1) node (v1) {} --++ (1,-1) node (v3) {} --++ (2,0) node (v4) {} --++ (1,1) node (v2) {}
(v1) ++ (0,-3) node (w1) {} --++ (1,1) node (w3) {} --++ (2,0) node (w4) {} --++ (1,-1) node (w2) {};

\foreach \i/\j in {1/2, 1/3, 1/4, 2/3, 2/4, 3/4}
\draw (v\i) -- (v\j) (w\i) -- (w\j);

\foreach \i in {1,2,3,4}
\draw (v\i) -- (w\i);

\draw (v1) ++ (-.25, .25) node[lStyle] {\footnotesize{$v_1$}};
\draw (v2) ++ (.25, .25) node[lStyle] {\footnotesize{$v_2$}};
\draw (v2) ++ (1.1, .25) node[lStyle] {\footnotesize{$A_2\cup B_2$}};
\draw (w1) ++ (-.25, -.25) node[lStyle] {\footnotesize{$w_1$}};
\draw (w1) ++ (-1.1, -.25) node[lStyle] {\footnotesize{$A_1\cup B_1$}};
\draw (w2) ++ (.25, -.25) node[lStyle] {\footnotesize{$w_2$}};
\end{tikzpicture}
\caption{$K_4\dbox K_2$, along with a partition of $L(v_2)$ and
$L(w_1)$.\label{K4K2-fig}}
\end{figure}

Finally, consider a coloring $\vph$ with $\vph(w_1),\vph(v_2)\in A$ and
$\vph(w_1)\ne \vph(v_2)$.  Let $\alpha_1:=\vph(w_1)$ and $\alpha_2:=\vph(v_2)$.
First suppose there exists $i\in\{3,4\}$ such that $\vph(v_i)=\alpha_1$; by
symmetry, assume that $\vph(v_3)=\alpha_1$.  Now $\LL_{w_1,\alpha_1}
\cap\LL_{v_3,\alpha_1}$ mixes and, for each $\beta\in B_2$, there exists
$\vph'\in\LL_{w_1,\alpha_1}\cap\LL_{v_3,\alpha_1}\cap\LL_{v_2,\beta}$.  Since
$\vph'\in\CC_1$, we are done.  Otherwise, there exists $i\in\{3,4\}$ such that
$\vph(v_i)\notin L(v_1)$; by symmetry, assume $i=3$, and let $\beta:=\vph(v_3)$.
Now $\LL_{v_3,\beta}$ mixes, and for each $\gamma\in B_1$, we have
$\LL_{v_3,\beta}\cap\LL_{w_1,\gamma}\ne \emptyset$.  Since $\LL_{w_1,\gamma}$
mixes and is contained in $\CC_1$, we are done.
This concludes the case that $L(v_i)\ne L(w_j)$ for some distinct $i,j\in[4]$.
So we have $L(v_1)=L(w_2)=L(v_4)=L(w_1)=L(v_3)=L(w_4)=L(v_2)=L(w_3)$.
That is, all lists are identical.  Let $A:=L(v_1)$. 

Now we show that for each distinct pair $i,j\in[4]$ the set of all $L$-colorings
with $\vph(v_i)=\vph(w_j)$ mixes, and thus the set of all $L$-colorings mixes.  More
precisely, fix $\alpha\in A$.  Clearly $\LL_{v_i,\alpha}\cap\LL_{w_j,\alpha}$
mixes.  Also, fix distinct $k,\ell\in[4]$ such that $k\ne i$ and $\ell\ne j$,
and fix $\beta\in A\setminus\{\alpha\}$.  Now there exists an
$L$-coloring $\vph$ with $\vph(v_i)=\vph(w_j)=\alpha$ and
$\vph(v_k)=\vph(w_{\ell})=\beta$.  This implies that 
$\bigcup_{\alpha\in A}(\LL_{v_i,\alpha}\cap\LL_{w_j,\alpha})$ mixes; 
so let $D_{i,j}:=\bigcup_{\alpha\in A}(\LL_{v_i,\alpha}\cap\LL_{w_j,\alpha})$.
Finally, $D_{1,2}$ mixes with $D_{2,3}$, which mixes with $D_{1,4}$, which mixes
with $D_{3,2}$, which mixes with $D_{1,3}$.  Now we are done, since every
$L$-coloring of $G$ lies in $D_{1,2}\cup D_{1,3}\cup D_{1,4}$.  That is, 
$\LL$ mixes.
\end{proof}

\begin{lem}
If $G$ is a bipartite $\theta$-graph, with one of its three edge-disjoint paths
of length 1, then the line graph of $G$ is degree-swappable.
\label{short-theta-lem}
\end{lem}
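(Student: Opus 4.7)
The plan is to follow the template of the proof of Lemma~\ref{barbell-swappable-lem}. Let $G$ consist of $v,w$ together with the edge $e_0:=vw$ and two internally disjoint $v,w$-paths of odd lengths $b,c\ge 3$ (bipartiteness combined with simplicity forces both other path-lengths odd and at least $3$), with edges $f_1,\ldots,f_b$ and $g_1,\ldots,g_c$. Let $H$ denote the line graph of $G$. In $H$, the vertex $e_0$ has degree $4$ with neighbors $\{f_1,g_1,f_b,g_c\}$; each of these ``corner'' vertices has degree $3$; every interior chain-vertex has degree $2$; and $H-e_0$ is an even cycle of length $b+c$. The two triangles $\{e_0,f_1,g_1\}$ and $\{e_0,f_b,g_c\}$ share $e_0$, and for any interior chain-edge $e$ the graph $H-e$ still contains an even cycle as a block, hence is degree-choosable.

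The first step is to normalize the degree assignment $L$ by applying Lemma~\ref{big-intersection} to every interior edge of each chain. This yields $L(f_2)=\cdots=L(f_{b-1})=:\{a_1,a_2\}$, $L(f_1)=\{a_1,a_2,c_1\}$, and $L(f_b)=\{a_1,a_2,c_b\}$ for some extra colors $c_1,c_b$; analogously $L(g_j)=:\{a_1',a_2'\}$ for interior $g_j$ and $L(g_1)=\{a_1',a_2',d_1\}$, $L(g_c)=\{a_1',a_2',d_c\}$. Under these lists, every proper coloring of the $f$-chain forces alternation between $a_1$ and $a_2$ on $f_2,\ldots,f_{b-1}$; since $b$ is odd this gives $\vph(f_2)=\vph(f_{b-1})$, so the color of $f_2$ pins down the entire interior of the chain, and analogously for the $g$-chain.

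Next I would partition $\LL$ by $\vph(e_0)=\alpha\in L(e_0)$ and show each $\LL_{e_0,\alpha}$ mixes. Since $|L(e_0)|=4>3=|L(f_1)|$, any $\alpha\in L(e_0)\setminus L(f_1)$ gives mixing immediately by Corollary~\ref{degen-cor2} (with $w=f_1$); the same conclusion follows whenever $\alpha$ is missing from one of $L(g_1),L(f_b),L(g_c)$. For an $\alpha$ lying in all four corner lists I split on whether $\alpha\in\{a_1,a_2\}$ or $\alpha\in\{a_1',a_2'\}$: in either case, the forced alternation pins down the interior of the corresponding chain and forces determined values at its two end-corners, so Corollary~\ref{degen-cor3} applied to $H$ with that chain's interior deleted (and both end-corners precolored accordingly) yields mixing of $\LL_{e_0,\alpha}$.

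Finally, I would link $\LL_{e_0,\alpha}$ with $\LL_{e_0,\beta}$ for distinct $\alpha,\beta\in L(e_0)$ by exhibiting some $\vph\in\LL_{e_0,\alpha}$ in which no neighbor of $e_0$ uses color $\beta$; the trivial $(\alpha,\beta)$-swap on the singleton component $\{e_0\}$ then lands in $\LL_{e_0,\beta}$. The size-$3$ corner lists together with the flexibility on the opposite chain make constructing such a witness routine in most cases. The main obstacle is the sub-case in which a single color $\alpha$ simultaneously lies in $L(f_1)\cap L(g_1)\cap L(f_b)\cap L(g_c)\cap\{a_1,a_2\}\cap\{a_1',a_2'\}$: then the cycle $H-e_0$ with lists reduced by $\alpha$ becomes an even cycle with all size-$2$ lists, the frozen configuration of the $4$-cycle example in the introduction. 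Circumventing it requires swaps that temporarily leave $\LL_{e_0,\alpha}$ by recoloring one triangle corner via its extra color $c_1,c_b,d_1,$ or $d_c$, and verifying that such a swap is always available is the most delicate piece of the case analysis.
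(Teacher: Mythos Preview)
Your approach differs from the paper's in a substantive way: you partition $\LL$ by the color on the central 4-vertex $e_0$, whereas the paper partitions by the colors on the two corner 3-vertices of one chain (its $v_1,v_2$, your $f_1,f_b$), obtaining three classes $\LL_1=\LL_{v_1,a}\cap\LL_{v_2,a}$, $\LL_2=\LL_{v_1,b}\cap\LL_{v_2,b}$, and $\LL_3=\LL_{v_1,c_1}\cup\LL_{v_2,c_2}$, and then uses the symmetric partition $\LL_4,\LL_5,\LL_6$ from the other chain to link them.

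Your argument has a genuine gap in the step claiming that $\LL_{e_0,\alpha}$ mixes when $\alpha$ lies in all four corner lists. First, the case split ``$\alpha\in\{a_1,a_2\}$ or $\alpha\in\{a_1',a_2'\}$'' is not exhaustive: one can have $\alpha=c_1=c_b=d_1=d_c$ with $\alpha\notin\{a_1,a_2,a_1',a_2'\}$ (consistency with the Lemma~\ref{big-intersection} constraint on the edges $f_1g_1$ and $f_bg_c$ merely forces $\{a_1,a_2\}\cap\{a_1',a_2'\}\neq\emptyset$). Second, and more seriously, even when $\alpha\in\{a_1,a_2\}$ the claim that the alternation ``forces determined values at its two end-corners'' is false. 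With $\alpha=a_1$ there are two interior patterns; if $\vph(f_2)=a_1$ then $\vph(f_{b-1})=a_1$ as well, so the two neighbors $e_0$ and $f_2$ of $f_1$ forbid only $a_1$, leaving $\vph(f_1)\in\{a_2,c_1\}$ undetermined (and likewise for $f_b$). Corollary~\ref{degen-cor3} therefore does not apply as you describe.

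The underlying obstacle is structural: fixing $\vph(e_0)=\alpha$ reduces the question to a degree assignment on the even cycle $H-e_0$, and the paper explicitly notes that no cycle is degree-swappable. So whether each $\LL_{e_0,\alpha}$ mixes is far from automatic; establishing it would itself require swaps that temporarily recolor $e_0$, which defeats the purpose of the partition. The paper avoids this entirely by pivoting on the corners $v_1,v_2$ instead: since both lie in $N(z)$ and have 3-element lists, Corollaries~\ref{degen-cor2} and~\ref{degen-cor3} apply directly, and after both are fixed the 4-vertex $z$ still has a spare color, so the remaining graph is handled by Corollary~\ref{degen-cor1}.
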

\begin{proof}
Let $H$ be the line graph of $G$.  Now $H$ has four 3-vertices;
call these $v_1, v_2, w_1, w_2$, so that $v_1w_1,v_2w_2\in E(H)$; see
Figure~\ref{theta-line-fig}.  Let
$x_1,\ldots$ denote the internal 2-vertices of a $v_1,v_2$-path of even length, and let
$y_1,\ldots$ denote the internal 2-vertices of a $w_1,w_2$-path of even length.  Denote the
sole 4-vertex by $z$, where $N_H(z)=\{v_1,v_2,w_1,w_2\}$.  Denote $L(x_1)$ by
$\{a,b\}$.  By Lemma~\ref{big-intersection}, we assume that $L(x_i)=\{a,b\}$ for
all $i$, that $L(v_1)=\{a,b,c_1\}$, and that $L(v_2)=\{a,b,c_2\}$, for some
colors $c_1$ and $c_2$.  
Let $\LL_1:=\LL_{v_1,a}\cap \LL_{v_2,a}$ and $\LL_2:=\LL_{v_1,b}\cap
\LL_{v_2,b}$.  By Corollary~\ref{degen-cor3}, both $\LL_1$ and $\LL_2$ mix.
Note that $\LL_{v_1,c_1}$ mixes by Corollary~\ref{degen-cor2}, as does $\LL_{v_2,c_2}$.
Let $\LL_3:=\LL_{v_1,c_1}\cup \LL_{v_2,c_2}$.  If we color $v_1$ with $c_1$ and
color $v_2$ with $c_2$, then we can extend to an $L$-coloring, since the even
cycle induced by $w_1,w_2,z$, and the $y_i$'s is 2-choosable.  Thus, $\LL_3$
mixes.  Note also that $\LL_1, \LL_2, \LL_3$ partition $\LL$.

By symmetry, between the $v_i$'s and $w_i$'s and the $x_i$'s and $y_i$'s, we
assume there exist colors $a',b',c_1',c_2'$ such that $L(y_i)=\{a',b'\}$,
$L(w_1)=\{a',b',c_1'\}$, and 
$L(w_2)=\{a',b',c_2'\}$.  Analogous to $\LL_1,\LL_2,\LL_3$, we define
$\LL_4,\LL_5,\LL_6$, which each mix and which also partition $\LL$.
Since there exists an $L$-coloring in $\LL_3\cap \LL_6$, we note that $\LL_3\cup
\LL_6$ mixes.  By symmetry between $a$ and $b$, we assume that $c_1'\ne a$.
So there exists some $L$-coloring in $\LL_1\cap \LL_6$.  Similarly, there exists
some $L$-coloring in $\LL_3\cap\LL_4$.  So $\LL_1\cup \LL_3\cup\LL_4\cup \LL_6$
mixes.  Finally, either $b\ne a'$ or $b\ne c_1'$.  In the first case, there
exists a coloring in $\LL_2\cap \LL_4$; in the second, there exists a coloring
in $\LL_2\cap \LL_6$.  So $\LL_2$ mixes with $\LL_1\cup \LL_3\cup\LL_4\cup
\LL_6$.  Since $\LL_1$, $\LL_2$, $\LL_3$ partitions $\LL$, we are done.
\end{proof}

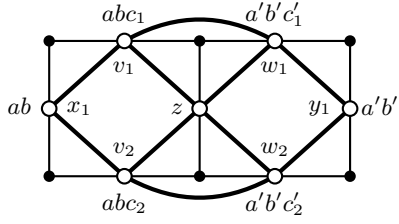
\begin{figure}[!t]
\centering
\begin{tikzpicture}[thick, yscale=.9]
\tikzstyle{bStyle}=[shape = circle, minimum size = 3.5pt, inner sep = 1pt,
outer sep = 0pt, draw, fill=black]
\tikzstyle{wStyle}=[shape = circle, minimum size = 5.5pt, inner sep = 1pt,
outer sep = 0pt, draw, fill=white]
\tikzstyle{bStyle}=[shape = circle, minimum size = 3.5pt, inner sep = 1pt,
outer sep = 0pt, draw, fill=black]
\tikzstyle{lStyle}=[shape = circle, draw=none, fill=none]
\tikzset{every node/.style=wStyle}

\begin{scope}
\tikzset{every node/.style=bStyle}
\draw (0,0) node {} -- (2,0) node {} -- (4,0) node {} -- (4,-2) node {} --
(2,-2) node {} -- (0,-2) node {} -- cycle (2,0) -- (2,-2);
\end{scope}

\draw[ultra thick] (1,0) node[thick] (v1) {} 
-- (2,-1) node[thick] (z) {}
-- (1,-2) node[thick] (v2) {}
-- (0,-1) node[thick] (x1) {}
-- (v1) (z) -- (3,0) node[thick] (w1) {}
-- (4,-1) node[thick] (y1) {}
-- (3,-2) node[thick] (w2) {}
-- (z) (v1) edge[bend left] (w1) (v2) edge[bend right] (w2);

\draw (v1) ++ (0,-.4) node[lStyle] {\footnotesize{$v_1$}};
\draw (v1) ++ (0,.4) node[lStyle] {\footnotesize{$abc_1$}};
\draw (v2) ++ (0,.4) node[lStyle] {\footnotesize{$v_2$}};
\draw (v2) ++ (0,-.4) node[lStyle] {\footnotesize{$abc_2$}};
\draw (x1) ++ (.4,0) node[lStyle] {\footnotesize{$x_1$}};
\draw (x1) ++ (-.4,.04) node[lStyle] {\footnotesize{$ab$}};
\draw (z) ++ (-.3,0) node[lStyle] {\footnotesize{$z$}};
\draw (w1) ++ (0,-.4) node[lStyle] {\footnotesize{$w_1$}};
\draw (w1) ++ (0,.4) node[lStyle] {\footnotesize{$a'b'c'_1$}};
\draw (w2) ++ (0,.4) node[lStyle] {\footnotesize{$w_2$}};
\draw (w2) ++ (0,-.4) node[lStyle] {\footnotesize{$a'b'c'_2$}};
\draw (y1) ++ (-.4,0) node[lStyle] {\footnotesize{$y_1$}};
\draw (y1) ++ (.4,.04) node[lStyle] {\footnotesize{$a'b'$}};
\end{tikzpicture}
\caption{A $\theta$-graph with a path of length 1, its line graph,
and (most of) a list assignment for the line graph.\label{theta-line-fig}}
\end{figure}

\begin{defn}
A \emph{prism} consists of three vertex disjoint paths $v_1\cdots w_1$,
$v_2\cdots w_2$, and $v_3\cdots w_3$, each of length at least 1, such that
$\{v_1,v_2,v_3\}$ and $\{w_1,w_2,w_3\}$ each induce $K_3$ and all interior
vertices of the paths have degree 2.  We denote the $v_i,w_i$-path by $P_i$ for
each $i\in [3]$.  See Figure~\ref{prism-fig}.
\label{prism-defn}
\end{defn}

If each of three edge-disjoint paths in a $\theta$-graph $G$ has length at
least 2, then the line graph of $G$ is a prism.  Thus,
Lemmas~\ref{short-theta-lem} and~\ref{prism-lem} imply (2) in
Lemma~\ref{reduc-lem}.

\begin{lem}
If $G$ is a prism other than $K_3\dbox K_2$, then $G$ is degree-swappable.
\label{prism-lem}
\end{lem}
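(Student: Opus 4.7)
Let $L$ be a degree assignment on $G$; since $G\ne K_3\dbox K_2$, assume without loss of generality that $\ell_1\ge 2$. The plan is to normalize the lists along each long path, identify several mixing classes of $L$-colorings, and link them so as to conclude that $\LL$ mixes, in the spirit of Lemma~\ref{short-theta-lem}.

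First I apply Lemma~\ref{big-intersection} along the edges of each long path $P_i$ (those with $\ell_i\ge 2$); in each case $G-e$ is connected and not a Gallai tree, since it still contains both triangles joined by multiple internally disjoint paths, hence $G-e$ is degree-choosable. This lets me assume that, for each $i$ with $\ell_i\ge 2$, the interior vertices of $P_i$ share a common list $\{a_i,b_i\}$, $L(v_i)=\{a_i,b_i,c_i\}$, and $L(w_i)=\{a_i,b_i,c_i'\}$; and that for each $i$ with $\ell_i=1$, $|L(v_i)\cap L(w_i)|\ge 2$. Corollary~\ref{degen-cor2} now yields that $\LL_{v_i,c_i}$ and $\LL_{w_i,c_i'}$ each mix for every $i$ with $\ell_i\ge 2$, since the color $c_i$ (resp.\ $c_i'$) is missing from the list of the neighbor of $v_i$ (resp.\ $w_i$) along $P_i$ and the corresponding vertex deletion leaves $G$ connected.

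For the main step, I partition $\LL$ by $(\vph(v_1),\vph(w_1))$ into at most nine nonempty cells. The cells $\LL_{v_1,c_1}$ and $\LL_{w_1,c_1'}$ already mix, and their union $\LL_{v_1,c_1}\cup\LL_{w_1,c_1'}$ mixes because there is a common $L$-coloring (set $v_1=c_1$ and $w_1=c_1'$ and extend greedily through the interior of $P_1$, the triangles, and $P_2,P_3$). For each remaining cell, $\vph(v_1),\vph(w_1)\in\{a_1,b_1\}$, in which case the coloring along the interior of $P_1$ is forced, and the cell reduces to a coloring problem on the smaller graph obtained after freezing $P_1$; I show this smaller problem mixes using Corollaries~\ref{degen-cor1}--\ref{degen-cor3} together with the short-barbell and short-theta arguments of Lemmas~\ref{barbell-swappable-lem} and~\ref{short-theta-lem}. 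Finally, each such cell is linked to either $\LL_{v_1,c_1}$ or $\LL_{w_1,c_1'}$ by a single $L$-valid $(c_1,\alpha)$-Kempe swap at $v_1$ or $(c_1',\beta)$-swap at $w_1$, once the corresponding Kempe component at that vertex has been reduced to a single vertex.

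The main obstacle will be controlling the Kempe component of the linking swap: when $c_1\in L(v_2)\cup L(v_3)$ and $\vph(v_j)=c_1$ for some $j\in\{2,3\}$, the $(c_1,\vph(v_1))$-component at $v_1$ extends through the triangle and possibly into $P_2,P_3$. Resolving this requires a preparatory $L$-valid sequence of Kempe swaps that exploits the already-established mixing of $\LL_{v_j,c_j}$ (when $\ell_j\ge 2$) or the short-barbell technique (when $\ell_j=1$) to first recolor $v_j$ away from $c_1$, shrinking the linking component to $\{v_1\}$ before the linking swap is performed.
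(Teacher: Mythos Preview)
Your proposal has a genuine gap in the ``main step.'' After you freeze $\vph(v_1),\vph(w_1)\in\{a_1,b_1\}$ (and hence all of $P_1$), the residual graph on $\{v_2,v_3,w_2,w_3\}$ together with the interiors of $P_2,P_3$ is a single cycle, and the residual lists form a $2$-assignment (each of $v_2,v_3,w_2,w_3$ loses exactly the colour of its neighbour among $v_1,w_1$). But as the paper itself stresses in the introduction, cycles are \emph{not} degree-swappable: the $4$-cycle with lists $\{1,2\},\{2,3\},\{3,4\},\{4,1\}$ has two $L$-colorings and admits no $L$-valid Kempe swap. So your sentence ``I show this smaller problem mixes using Corollaries~\ref{degen-cor1}--\ref{degen-cor3} together with the short-barbell and short-theta arguments'' cannot be right as stated: none of those results proves that a cycle is degree-swappable, and Lemmas~\ref{barbell-swappable-lem} and~\ref{short-theta-lem} concern entirely different graphs. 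You might intend the swaps to take place in $G$ rather than in the cycle alone, but then you must exhibit specific swaps through $P_1$ or the triangle edges that remain $L$-valid and return you to the same cell; your outline does neither. The linking step inherits the same circularity: ``reducing the Kempe component at $v_1$ to a single vertex'' requires first moving within the cell to a coloring with $\vph(v_2),\vph(v_3)\ne c_1$, which already presupposes that the cell mixes.

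The paper avoids this trap by a different reduction. It applies Lemma~\ref{big-intersection} to \emph{every} edge (including the triangle edges), and then disposes directly of the cases $|L(v_1)\cap L(v_2)\cap L(v_3)|\in\{1,2\}$ via Corollary~\ref{degen-cor2}, using the colour in some $L(v_i)$ missing from a triangle-neighbour's list. This forces $L(v_1)=L(v_2)=L(v_3)$, then symmetrically $L(w_1)=L(w_2)=L(w_3)$, and a further short argument gives $L(v_i)=L(w_j)=\{a,b,c\}$ for all $i,j$. Only after this uniformization of the triangle lists does the paper split into cases by how many $P_i$ have length~$>1$, and in each case it exhibits explicit $L$-valid Kempe swaps through the triangles (e.g.\ an $\alpha,c$-swap at $v_1$ that terminates at a vertex coloured $b$) rather than trying to mix a residual cycle. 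The uniformization of the $3$-vertex lists is the idea your plan is missing; without it the residual problem after freezing $P_1$ is exactly the non-swappable situation you need to avoid.
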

\begin{proof}
Fix a degree-assignment $L$ for $G$.  Note, for each $st\in E(G)$, that $G-st$
is connected and contains a $\theta$-graph; thus it is degree-choosable.  So,
by Lemma~\ref{big-intersection}, we assume $|L(s)\cap L(t)|\ge 2$ for all
$st\in E(G)$.  We label the vertices as in Definition~\ref{prism-defn};
see Figure~\ref{prism-fig}.  We show that we can assume that
$L(v_1)=L(v_2)=L(v_3)$ (and similarly, that $L(w_1)=L(w_2)=L(w_3)$).
Suppose the contrary.

First suppose that $|L(v_1)\cap L(v_2)\cap L(v_3)|=2$.  We assume that
$L(v_1)=\{a,b,c_1\}$, $L(v_2)=\{a,b,c_2\}$, and $L(v_3)=\{a,b,c_3\}$ where
possibly $c_1=c_2$, but $c_1\ne c_3$ and $c_2\ne c_3$; 
see the center of Figure~\ref{prism-fig}. 
Now $\LL_{v_1,c_1}$ mixes, by Corollary~\ref{degen-cor2}, since
$v_1\in N(v_3)$, but $c_1\notin L(v_3)$.  Similarly, $\LL_{v_2,c_2}$ mixes and
$\LL_{v_3,c_3}$ mixes.  Further, $\LL_{v_1,c_1}\cap \LL_{v_3,c_3}\ne \emptyset$.
To see this, first color $v_1$ and $v_3$ as
prescribed and then color greedily in order of non-increasing distance from
$v_2$; we can color $v_2$ last since $v_3\in N(v_2)$, but $\vph(v_3)\notin
L(v_2)$.  Similarly, $\LL_{v_2,c_2}\cap\LL_{v_3,c_3}\ne \emptyset$.
Thus $\LL_{v_1,c_1}\cup \LL_{v_3,c_3}$ mixes and $\LL_{v_2,c_2}\cup
\LL_{v_3,c_3}$ mixes.  Since
$\LL_{v_1,c_1}\cup\LL_{v_2,c_2}\cup\LL_{v_3,c_3}=\LL$, we conclude that $\LL$
mixes when $|L(v_1)\cap L(v_2)\cap L(v_3)|=2$.  

Now suppose instead that $|L(v_1)\cap L(v_2)\cap L(v_3)|=1$.  By possibly
renaming colors we assume that $L(v_1)=\{a,b_2,b_3\}$,
$L(v_2)=\{a,b_1,b_3\}$, and $L(v_3)=\{a,b_1,b_2\}$, where 
$b_1,b_2,b_3$ are distinct; again, see Figure~\ref{prism-fig}. 
Now $\LL_{v_i,b_j}$ mixes by Corollary~\ref{degen-cor2}, 
whenever $i\in[3]$ and $b_j\in L(v_i)$.
Arguments similar to those above show that 
$\LL_{v_1,b_2}\cup \LL_{v_2,b_3}\cup \LL_{v_3,b_1}$ mixes.
Similarly, 
$\LL_{v_1,b_3}\cup \LL_{v_2,b_1}\cup \LL_{v_3,b_2}$ mixes.
So $\LL$ forms at most 2 equivalence classes.  We must show that it forms only
a single equivalence class.  Since $G\ne K_3\dbox K_2$, some vertex in
$\{v_1,v_2,v_3\}$ has a 2-neighbor.  By symmetry, we assume that it is $v_1$,
and we call its 2-neighbor $x$.  Pick $\alpha\in L(v_1)\setminus L(x)$. 
Now $\LL_{v_1,\alpha}$ mixes.  Further, $\LL_{v_1,\alpha}\cap
\LL_{v_2,b_1}\ne\emptyset$ and $\LL_{v_1,\alpha}\cap
\LL_{v_3,b_1}\ne\emptyset$.  Thus, $\LL$ mixes when $|L(v_1)\cap L(v_2)\cap
L(v_3)|=1$.  Hence, as desired, we conclude that $|L(v_1)\cap L(v_2)\cap
L(v_3)|=3$, i.e., $L(v_1)=L(v_2)=L(v_3)$.  By symmetry, we also have
$L(w_1)=L(w_2)=L(w_3)$.

Suppose that $L(v_1)\ne L(w_1)$.  Since $G$ is a prism other than $K_3\dbox K_2$,
some $v_i$ has a 2-neighbor.  By symmetry among $v_1,v_2,v_3$, we assume that
$v_1$ has a 2-neighbor.  For each 2-vertex $x$ on the path from $v_1$ to
$w_1$, we assume, by Lemma~\ref{big-intersection}, that $L(x)\subseteq L(v_1)$
and $L(x)\subseteq L(w_1)$.  Thus,
$L(v_1)\cap L(w_1)=L(x)$.  By symmetry, we assume that $L(v_1)=\{a,b,c\}$ and
$L(w_1)=\{a,b,d\}$.  Fix $i,j\in[3]$.  By Corollary~\ref{degen-cor2}, each of 
$\LL_{v_i,c}$ and $\LL_{w_j,d}$ mixes.  Further, $\LL_{v_i,c}\cup \LL_{w_j,d}$
mixes, as follows.  If $i\ne j$, then we construct an
$L$-coloring $\vph$ with $\vph(v_i)=c$ and $\vph(w_j)=d$, by coloring $v_i$ and
$w_j$ as desired, and then coloring greedily in order of non-increasing distance
from $v_j$.  Suppose instead that $i=j$.  Now there exist $L$-colorings
$\vph_1$, $\vph_2$, and $\vph_3$ with $\vph_1(v_i)=c$ and $\vph_1(w_{i+1})=d$,
with $\vph_2(w_{i+1})=d$ and $\vph_2(v_{i+2})=c$, and with $\vph_3(v_{i+2})=c$
and $\vph_3(w_i)=d$ (with all subscripts modulo 3).  Thus,  $\LL_{v_i,c}\cup
\LL_{w_i,d}$ mixes.  But every $L$-coloring $\vph$
has $\vph(v_i)=c$ for some $i$ and has $\vph(w_j)=d$ for some $j$.  Thus, 
$\LL$ mixes when $L(v_1)\ne L(w_1)$.  So we assume that $L(v_1)=L(w_1)$.  By the
previous paragraph, this implies that $L(s)=\{a,b,c\}$ for every 3-vertex $s$.

Now we consider three cases, depending on the number of $i\in[3]$ such that the
$v_i,w_i$-path $P_i$ has length longer than 1.  By symmetry, we assume that
$\ell(P_1)\ge \ell(P_2)\ge \ell(P_3)$; so $\ell(P_1)>1$, since $G$ is a prism
other than $K_2\dbox K_3$.  Recall that all interior vertices on each $P_i$
have the same list, that this list is contained in $L(v_i)$, and that
$L(w_i)=L(v_i)$.

\begin{figure}[t]
\centering
\begin{tikzpicture}[thick, scale=.90, xscale=.7]
\tikzstyle{bStyle}=[shape = circle, minimum size = 3.5pt, inner sep = 1pt,
outer sep = 0pt, draw, fill=black]
\tikzstyle{wStyle}=[shape = circle, minimum size = 5.5pt, inner sep = 1pt,
outer sep = 0pt, draw, fill=white]
\tikzstyle{bStyle}=[shape = circle, minimum size = 3.5pt, inner sep = 1pt,
outer sep = 0pt, draw, fill=black]
\tikzstyle{lStyle}=[shape = circle, draw=none, fill=none]
\tikzset{every node/.style=wStyle}
\def\rad{.5in}

\draw (120:\rad) node (v1) {};
\draw (240:\rad) node (v2) {};
\draw   (0:\rad) node (v3) {};
\draw (v1) -- (v2) -- (v3) -- (v1);
\draw (v1) ++ (.3,-.45) node[lStyle] {\footnotesize{$v_1$}};
\draw (v2) ++ (.3,.45) node[lStyle] {\footnotesize{$v_2$}};
\draw (v3) ++ (-.55,0) node[lStyle] {\footnotesize{$v_3$}};

\begin{scope}[xshift=2.0in]
\draw  (60:\rad) node (w1) {};
\draw (300:\rad) node (w2) {};
\draw (180:\rad) node (w3) {};
\draw (w1) -- (w2) -- (w3) -- (w1);
\end{scope}
\draw (w1) ++ (-.3,-.45) node[lStyle] {\footnotesize{$w_1$}};
\draw (w2) ++ (-.3,.45) node[lStyle] {\footnotesize{$w_2$}};
\draw (w3) ++ (.55,0) node[lStyle] {\footnotesize{$w_3$}};

\draw[thick,snake=coil, segment aspect=0, segment amplitude=2pt,segment length=15pt] 
(v1) -- (w1) (v2) -- (w2) (v3) -- (w3);

\begin{scope}[xshift=4.40in]
\draw (120:\rad) node (v1) {};
\draw (240:\rad) node (v2) {};
\draw   (0:\rad) node (v3) {};
\draw (v1) -- (v2) -- (v3) -- (v1);
\draw (v1) ++ (.3,-.45) node[lStyle] {\footnotesize{$v_1$}};
\draw (v1) ++ (-.3,.4) node[lStyle] {\footnotesize{$abc_1$}};
\draw (v2) ++ (.3,.45) node[lStyle] {\footnotesize{$v_2$}};
\draw (v2) ++ (-.3,-.4) node[lStyle] {\footnotesize{$abc_2$}};
\draw (v3) ++ (-.55,0) node[lStyle] {\footnotesize{$v_3$}};
\draw (v3) ++ (.3,.4) node[lStyle] {\footnotesize{$abc_3$}};
\end{scope}

\begin{scope}[xshift=6.70in]
\draw (120:\rad) node (v1) {};
\draw (240:\rad) node (v2) {};
\draw   (0:\rad) node (v3) {};
\draw (v1) -- (v2) -- (v3) -- (v1);
\draw (v1) ++ (.3,-.45) node[lStyle] {\footnotesize{$v_1$}};
\draw (v1) ++ (-.3,.4) node[lStyle] {\footnotesize{$ab_2b_3$}};
\draw (v2) ++ (.3,.45) node[lStyle] {\footnotesize{$v_2$}};
\draw (v2) ++ (-.3,-.4) node[lStyle] {\footnotesize{$ab_1b_3$}};
\draw (v3) ++ (-.55,0) node[lStyle] {\footnotesize{$v_3$}};
\draw (v3) ++ (.3,.4) node[lStyle] {\footnotesize{$ab_1b_2$}};
\end{scope}
\end{tikzpicture}

\caption{Left: A prism $G$.  Center and right: List assignments for
$v_1$, $v_2$, $v_3$ such that 
$|L(v_1)\cap L(v_2)\cap L(v_3)|=2$ and
$|L(v_1)\cap L(v_2)\cap L(v_3)|=1$, respectively.
\label{prism-fig}}

\end{figure}
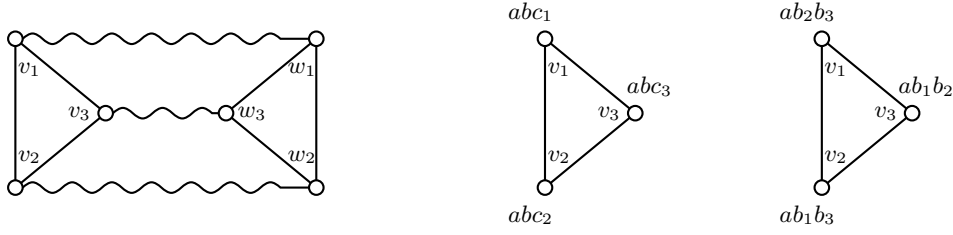

\textbf{Case 1: $\bm{\ell(P_1) > \ell(P_2)=\ell(P_3)=1}.$}
By symmetry, we assume that $L(x)=\{a,b\}$ for each interior vertex $x$ of
$P_1$.  Now $\LL_{v_1,c}$ and $\LL_{w_1,c}$ each mix.  Further,
$\LL_{v_1,c}\cap\LL_{w_1,c}\ne\emptyset$, so $\LL_{v_1,c}\cup\LL_{w_1,c}$ mixes.
Suppose
instead, by symmetry, that $\vph(v_1)=a$ and $\vph(w_1)\ne c$.  Performing
an $a,c$-swap at $v_1$ yields an $L$-coloring $\vph'$ with $\vph'(v_1)=c$. 
This is because $\{a,c\}\subseteq L(s)$ for every 3-vertex $s$ and
either $\vph(w_1)=b$ or $\vph(x)=b$, where $x$ is the neighbor of $w_1$ on
$P_1$.  So $\LL$ mixes.

\textbf{Case 2: $\bm{\ell(P_1)\ge \ell(P_2) > \ell(P_3)=1.}$}
By symmetry, we assume that $L(x)=\{a,b\}$ for each interior vertex $x$ of
$P_1$.  Suppose also that $L(y)=\{a,b\}$ for each interior vertex $y$ of $P_2$.
Now $\LL_{s,c}$ mixes for each $s\in\{v_1,v_2,w_1,w_2\}$.  Further,
$\LL_{v_1,c}\cap \LL_{w_2,c}\ne\emptyset$ and 
$\LL_{w_1,c}\cap \LL_{v_2,c}\ne\emptyset$, so 
$\LL_{v_1,c}\cup \LL_{w_2,c}$ and $\LL_{w_1,c}\cup \LL_{v_2,c}$ each
mix.  Note that the union of these two sets is $\LL$.
If $\ell(P_1)$ is odd, then $\LL_{v_2,c}\cap \LL_{w_2,c}\ne\emptyset$, so $\LL$
mixes, since the cycle in $G\setminus\{v_2,w_2\}$ is even and thus 2-choosable.
So we assume $\ell(P_1)$ is even. Similarly, we assume $\ell(P_2)$ is even.
Suppose that $\vph(v_3)=c$, $\vph(w_2)=c$ and $\vph(w_3)=\alpha$, for some
$\alpha\in\{a,b\}$.  Since $\ell(P_1)$ and $\ell(P_2)$ are both even, the
$\alpha,c$-component containing $w_3$ has vertex set $\{v_2,v_3,w_2,w_3\}$.
Now performing an $\alpha,c$-swap on this component yields a
valid $L$-coloring $\vph'$ with $\vph'(v_2)=c$. 
Thus, $\LL$ mixes when $L(y)=\{a,b\}$.

Instead assume that $L(y)=\{a,c\}$ for each interior vertex $y$ of $P_2$.  
Again $\LL_{v_1,c}$, $\LL_{w_1,c}$ $\LL_{v_2,b}$, and $\LL_{w_2,b}$ each mix. 
Further, $\LL_{v_1,c}\cup\LL_{w_1,c}\cup\LL_{v_2,b}\cup\LL_{w_2,b}$ mixes.  
If $\vph\in\LL_{v_1,a}\setminus\LL_{w_1,c}$, then an $a,c$-swap at $v_1$ yields
some $\vph'\in\LL_{v_1,c}$, as in Case 1, and we are done.  Similarly, if
$\vph\in\LL_{v_2,a}\setminus\LL_{w_2,b}$, then we use an $a,b$-swap at $v_2$. 
So instead we assume that $\vph\in(\LL_{v_1,b} \cap \LL_{v_2,c})
\setminus (\LL_{w_1,c} \cup \LL_{w_2,b})$.  But now a $b,c$-swap at $v_1,v_2$
yields an $L$-coloring in the set above that mixes.  So $\LL$ mixes.

\textbf{Case 3: $\bm{\ell(P_1)\ge \ell(P_2) \ge \ell(P_3)>1.}$}
By symmetry, we assume that $L(x)=\{a,b\}$ for all interior vertices $x$ of $P_1$.
First suppose that $L(y)=\{a,b\}$ and $L(z)=\{a,b\}$ for all interior vertices
$y$ of $P_2$ and all interior vertices $z$ of $P_3$.  Now $\LL_{v_i,c}$ and
$\LL_{w_j,c}$ each mix, for all $i,j\in[3]$.  Further, since
$\LL_{v_i,c}\cap\LL_{w_j,c}\ne\emptyset$ for all distinct $i,j\in[3]$, we get that
$\LL_{v_i,c}\cup\LL_{w_j,c}$ mixes for all distinct $i,j\in[3]$.  Applying this
idea repeatedly, $\LL_{v_1,c}$ mixes with $\LL_{w_2,c}$, which
mixes with $\LL_{v_3,c}$, which mixes with $\LL_{w_1,c}$, which mixes with
$\LL_{v_2,c}$.  Thus, $\cup_{i\in[3]}\LL_{v_i,c}$ mixes.  But this union is
$\LL$, so $\LL$ mixes.  

Suppose instead that $\vph(x)=\vph(y)=\{a,b\}$ for all interior vertices $x$ of
$P_1$ and interior vertices $y$ of $P_2$, but that $\vph(z)=\{a,c\}$ for all
interior vertices $z$ of $P_3$.  As above, $\LL_{v_1,c}$, $\LL_{v_2,c}$,
$\LL_{w_1,c}$, and $\LL_{w_2,c}$ each mix.  Also $\LL_{v_1,c}\cap\LL_{w_2,c}
\ne \emptyset$ and $\LL_{v_2,c}\cap\LL_{w_1,c}\ne\emptyset$.
Finally, $\LL_{v_1,c}\cap\LL_{w_1,c}\ne\emptyset$, since every cycle is
colorable from every 2-assignment unless all lists are equal (and the cycle is
odd).  Assume instead that $\vph(v_3)=\vph(w_3)=c$.  By symmetry, assume that
$\vph(v_1)=b$.  Now a $b,c$-swap at $v_3$ gives an $L$-coloring $\vph'$ with
$\vph'(v_1)=c$ and $\vph'(v_3)=b$.  Thus, $\LL$ mixes.

Finally, assume 
that $\vph(x)=\{a,b\}$ for all interior vertices $x$ of $P_1$, 
that $\vph(y)=\{a,c\}$ for all interior vertices $y$ of $P_2$, and 
that $\vph(z)=\{b,c\}$ for all interior vertices $z$ of $P_3$.
Similar to above,
$\LL_{v_1,c}\cup\LL_{w_1,c}\cup\LL_{v_2,b}\cup\LL_{w_2,b}\cup\LL_{v_3,a}\cup\LL_{w_3,a}$
mixes.
So assume that $\vph$ is not in this set of
colorings.  By symmetry (possibly swapping $P_2$ with $P_3$
and interchanging colors $a$ and $b$), we assume that $\vph(v_1)=a$,
$\vph(v_2)=c$, and $\vph(v_3)=b$.  In order to complete this coloring, the
lengths of $P_1$, $P_2$, and $P_3$ must have the same parity.

Suppose that $P_1$, $P_2$, and $P_3$ all have even length.  So
$\vph(w_1)=\vph(v_1)=a$, $\vph(w_2)=\vph(v_2)=c$, and $\vph(w_3)=\vph(v_3)=b$.
Now a $c,b$-swap at $v_2$ recolors all of $P_3$, as well as $v_2$
and $w_2$.  
This gives a new $L$-coloring $\vph'$ with
$\vph'(v_2)=\vph'(w_2)=b$, which mixes with all $L$-colorings above.
Assume instead that $P_1$, $P_2$, and $P_3$ all have odd length.
So $\vph(w_1)=b$, $\vph(w_2)=a$, and $\vph(w_3)=c$.  Now a $c,b$-swap at $v_2$
recolors all of $P_3$, as well as $v_2$ and $w_1$.
This gives a new $L$-coloring $\vph''$ with
$\vph''(v_2)=b$ and $\vph''(w_1)=c$, which mixes with all $L$-colorings above.
Thus, $\LL$ mixes.
\end{proof}

\section{Proofs of Structural Lemmas}
\label{structural-sec}
In this section, we prove our two structural lemmas.  For convenience, we
restate them.
Recall, for a plane graph $G$, that the subgraph \Emph{$G_3$} is
induced by all edges incident to vertices of degree 3.
If two vertices of degree 3 are adjacent, then $G$ contains an edge $vw$ such
that $d(v)+d(w)=6$, so the next lemma clearly holds.  Otherwise, the subgraph
$G_3$ must be bipartite.

\begin{struct-lemi}
If $G$ is a simple plane graph with $\delta\ge 2$, then either 
\begin{itemize}
\item[(C1)] $G$ contains an edge $vw$ with $d(v)+d(w)\le \max\{11,\DeltaG+2\}$; or
\item[(C2)] $G_3$ contains a bipartite barbell (possibly short); or
\item[(C3)] $G_3$ contains a bipartite $\theta$-graph other than $K_{2,3}$.
\end{itemize}
\end{struct-lemi}
Our proof is similar to a proof by Cohen and Havet~\cite{CH} of Borodin's result that
every simple planar graph with $\DeltaG\ge 9$ is $(\DeltaG+1)$-edge-choosable.
In fact, their proof implicitly contains an analogous structural lemma.
The main difference is that now we require either (C2) or (C3), whereas
they were satisfied with an even cycle in $G_3$.  Since (C2) and (C3) each
contain an even cycle, our version of the structural lemma is stronger.
\begin{proof}
Suppose the lemma is false, and let $G$ be a counterexample minimizing $\|G\|$.
We use discharging with initial charge $d(v)-4$ for each vertex $v$ and initial
charge $\ell(f)-4$ for each face $f$.   For each component of $G_3$, we also
use a ``pot'' with initial charge 0.  By Euler's formula, the sum of these initial
charges is $\sum_{v\in V(G)}(d(v)-4)+\sum_{f\in
F(G)}(\ell(f)-4)=4\|G\|-4|G|-4|F(G)|=-4(|F(G)|-\|G\|+|G|)=-8$.  So to reach a
contradiction, it suffices to discharge so that each vertex, face, and pot
finishes with nonnegative charge.  We use the following three discharging rules,
applied in succession.

\begin{enumerate}
\item[(R1)] Each $\Delta$-vertex with a 3-neighbor sends charge $1/2$ to
the pot for its component of $G_3$; each 3-vertex takes 1 from the pot for its
component of $G_3$.

\item[(R2)] Each $5^+$-vertex distributes its remaining charge equally, after
possibly applying (R1), among its incident faces (with multiplicity, if
applicable).

\item[(R3)] Each 3-vertex that lies on a 4-cycle in $G_3$ takes 1/2 from each
incident $4^+$-face and sends it to the pot for its component of $G_3$.
\end{enumerate}

Now we show that each vertex, face, and pot ends with charge nonnegative.
Note that $\delta\ge 3$, since $G$ has no instance of (C1).  If $v$ is a
3-vertex, then it takes charge 1 from the pot of its component, by (R1), and ends with charge
$3-4+1=0$.  (Note that (R3) does not affect the final charge of $v$.)  If $v$
is a 4-vertex, then it starts and ends with charge 0.  If $v$ is a
$5^+$-vertex, then it ends with charge 0, by (R2).  Next we consider faces.  Let
$f$ be a $4^+$-face. If $f$ is not incident to any 3-vertex, then it starts with charge
$\ell(f)-4\ge 0$ and ends with at least this much.  Suppose instead that $f$ has
an incident 3-vertex $v$.  Since $G$ has no instance of (C1), every neighbor of
$v$ along $f$ is a $\Delta$-vertex.  Further, $\Delta\ge 9$, since $\Delta+3>11$.  
Also, the number of $\Delta$-vertices incident to $f$ is at least the number of
incident 3-vertices (with multiplicity), and each $9^+$-vertex gives at least
$1/2$ to each incident face (see Claim~1 below for details).  So $f$ receives
at least as much charge from $\Delta$-vertices
as it gives to 3-vertices.  Hence $f$ ends with charge at least $\ell(f)-4\ge
0$.  So all that remain to consider are 3-faces and pots.
We bound the final charge of 3-faces via the following claim.

\begin{clm}
If a 3-face $f$ is incident to an $s$-vertex $v$ with $s\ge 5$, then $v$ sends $f$ at least
$1/5$ if $s\ge 5$, at least $1/3$ if $s\ge 6$, at least $3/7$ if $s\ge 7$, and
at least $1/2$ if $s\ge 8$.  
\end{clm}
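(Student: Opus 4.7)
The plan is to perform a short case analysis on $s$. The only rules that can affect $v$'s charge before (R2) distributes it are (R1) and (R2) itself, since (R3) transfers charge only from $4^+$-faces, never from vertices. So the quantity to compute is $v$'s post-(R1) charge divided by its number of face-incidences, which equals $s$.

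The key preliminary observation I would record is that a vertex of degree in $\{5,6,7,8\}$ cannot be a $\Delta$-vertex. Indeed, since (C1) fails, every edge $uw$ satisfies $d(u)+d(w) \ge \max\{12,\Delta+3\}$; applied to a $3$-vertex $u$ and its neighbor $w$, this forces $d(w) \ge \max\{9,\Delta\} = \Delta$ and $\Delta \ge 9$. Hence for $s \in \{5,6,7,8\}$, Rule (R1) leaves $v$'s charge untouched, and (R2) hands each incident face exactly $(s-4)/s$; substituting $s = 5, 6, 7, 8$ yields $1/5$, $1/3$, $3/7$, $1/2$ as advertised. For $s \ge 9$, I would use that (R1) removes at most a flat $1/2$ from $v$, exactly once, independent of how many $3$-neighbors $v$ has. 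Thus $v$ retains at least $s - 4 - 1/2 = s - 9/2$, which (R2) splits equally into $s$ shares of size at least $1 - 9/(2s)$; since $s \ge 9$, each such share is at least $1/2$, matching the final bound.

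The main (mild) obstacle, and the one place I would be careful, is the interpretation of (R1): if ``sends charge $1/2$'' were read as ``per $3$-neighbor'' rather than as a flat payment per $\Delta$-vertex, then the case $s = \Delta = 9$ with all nine neighbors of degree $3$ would collapse the share at $f$ to only $1/18$, breaking the claim. Once the flat-$1/2$ reading is locked in, what remains is just the five one-line arithmetic checks described above, so I do not anticipate any further technical difficulty.
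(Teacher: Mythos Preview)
Your proposal is correct and follows essentially the same approach as the paper. The paper splits into the cases ``$s\le\Delta-1$ or $v$ has no $3$-neighbor'' versus ``$s=\Delta$ and $v$ has a $3$-neighbor,'' while you split by whether $s\le 8$ or $s\ge 9$; but the underlying logic and arithmetic---that (R1) costs at most a flat $1/2$, only when $v$ is a $\Delta$-vertex with a $3$-neighbor, and that this forces $s\ge 9$ via the failure of (C1)---is identical, as is the computation $(s-4-\tfrac12)/s=1-9/(2s)\ge 1/2$ for $s\ge 9$. One small wording point: your sentence ``a vertex of degree in $\{5,6,7,8\}$ cannot be a $\Delta$-vertex'' is slightly stronger than what you actually justify or need (it presupposes a $3$-vertex exists); what you really use, and what suffices, is that such a vertex cannot be a $\Delta$-vertex \emph{with a $3$-neighbor}, which is exactly the hypothesis of (R1).
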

\begin{clmproof}
If $s\le \Delta-1$ or $v$ has no 3-neighbor, then this is clear since each face
incident to $v$ receives at least $(s-4)/s$.  Now suppose that $s=\Delta$ and
$v$ has a 3-neighbor.  Since $G$ has no instance of (C1), this implies that $s\ge 9$. 
Now each face incident to $s$ receives $(s-4-1/2)/s=1-9/(2s)$; this is at least
$1/2$, since $s\ge 9$.  This proves the claim.
\end{clmproof}

Now let $f$ denote a 3-face and let $t$ denote the smallest degree of a vertex
incident to $f$.  Since $f$ starts with charge $3-4=-1$, to end nonnegative it must
receive at least 1.  Since $G$ has no instance of (C1), each edge $vw$ has
$d(v)+d(w)\ge 12$.
If $t\le 4$, then $f$ is also incident to two $8^+$-vertices,
so $f$ receives at least $0+2(1/2)=1$.  If
$t=5$, then $f$ is also incident to two $7^+$-vertices, so $f$ receives at least 
$1/5+2(3/7)>1$.  And if $t=6$, then $f$ receives at least $3(1/3)=1$.
Thus, each 3-face finishes with nonnegative charge.

Finally, consider a component $H$ of $G_3$.  Denote by $n_3$ and $n_{\Delta}$ the
numbers of 3-vertices and $\Delta$-vertices in $H$.  Note that each 3-vertex
in $H$ has all its incident edges in $H$, so $\|H\|=3n_3$.  First suppose
that $H$ contains at most one cycle.  Now $3n_3=\|H\|\le
|H|=n_3+n_{\Delta}$, so $2n_3\le n_{\Delta}$.  Thus, the pot for $H$ ends
nonnegative.  Next suppose that $H$ contains at least two distinct cycles. 
If $H$ contains two cycles that intersect in at most one vertex, then $H$
contains a barbell, formed from the these two cycles and a shortest path
joining them.  We are done, since (C2) holds.  

Suppose instead that every pair of cycles in $H$ intersect in at least two
vertices.  Now the union of any two cycles in $H$ contains a $\theta$-graph. 
If $G_3$ contains any $\theta$-graph other than $K_{2,3}$, then we are done,
since (C3) holds.  So assume it does not.  
Let $H'$ denote the subgraph of $H$ induced by edges on cycles in
$G_3$.  Note that $H'$ is $K_{2,t}$ for some integer $t$.  
To see this, consider an ear decomposition of $H'$, starting with an arbitrary
cycle.  If the cycle is not a 4-cycle or the first ear does not have length 2,
then the cycle plus the first ear is a $\theta$-graph other than $K_{2,3}$, as
desired.  So every cycle in $H'$ must be a 4-cycle (since it can start an ear
decomposition), and every ear must have length 2 (and so must connect two
vertices in the same part).  Further, all ears must connect the same two
vertices or we get a $\theta$-graph other than $K_{2,3}$.  This proves that
$H'=K_{2,t}$.

To prove that the pot for $H$ ends with nonnegative charge, we show that we can
direct the edges of $H$ so that each of its $\Delta$-vertices has outdegree
1, each of its 3-vertices has indegree at least 1 and each of its 3-vertices
has indegree 2 if it is either not in $H'$ or not on a $4^+$-face.
Given such an orientation, we are done, since we can view each 3-vertex as
receiving charge $1/2$ from each of its inneighbors, and from its incident
$4^+$-face, if it has one.

For each edge not in $H'$, we simply direct it toward $H'$.  This ensures
that each 3-vertex has indegree at least 1 and that each 3-vertex not in $H'$
has indegree 2.  First suppose that $H'$ contains at least three
$\Delta$-vertices.  In fact, it contains exactly three, and $H'=K_{2,3}$.
In this case, at least one 3-vertex $x$ in $H'$ will lie on a $4^+$-face.
We direct an edge from one $\Delta$-vertex to $x$, and direct edges from the
other two $\Delta$-vertices to the remaining 3-vertex in $H'$.

So assume instead that $H'$ contains exactly two $\Delta$-vertices;
denote these by $x$ and $y$.
Consider a vertex $z$ in $H'$ that is adjacent to $x$ and $y$.
Since $z$ has only one incident edge besides $xz$ and $yz$, the path $xzy$ must
be part of the boundary of exactly one face incident with $z$.  If that face
does not contain edge $xy$, then it is a $4^+$-face.  Since $G$ is simple, at
most two verices $z$ can lie on a 3-face, $xyz$.  We direct an edge from $x$ to
one of these two vertices and direct an edge from $y$ to the other.  This
completes the desired orientation of $H'$, which shows that the pot for $H$
ends with nonnegative charge.  It also finishes the proof of the lemma.
\end{proof}

Recall, for a plane graph $G$, that the subgraph \Emph{$G_2$} is
induced by all edges incident to vertices of degree 2 that lie on at least one
3-face.  If two such vertices are adjacent, then $G$ contains an edge $vw$
with $d(v)+d(w)=4$, so the lemma below is clearly true.
Otherwise, $G_2$ is bipartite (with all vertices of degree 2 in one part). 
As a result, we call every cycle in $G_2$ a \emph{2-alternating cycle}.

\begin{struct-lemi}
If $G$ is a simple plane graph with $\delta\ge 2$, then either 
\begin{itemize}
\item[(C1)] $G$ contains an edge $vw$ with $d(v)+d(w)\le 16$; or
\item[(C2)] $G_2$ contains a bipartite barbell (possibly short); or
\item[(C3)] $G_2$ contains either $K_{2,4}$ or a bipartite $\theta$-graph other than $K_{2,3}$.
\end{itemize}
\end{struct-lemi}

\begin{proof}
Assume the lemma is false, and let $G$ be a counterexample minimizing $\|G\|$.
Note that $d(v)+d(w)\ge 17$ for every edge $vw\in E(G)$ and that $G$ is connected. 
To reach a contradiction, we use discharging with initial charge $d(v)-4$ for each
vertex $v$ and $\ell(f)-4$ for each face $f$.  
For each component of $G_2$, we also have a ``pot'' with initial charge 0.  Since
these initial charges sum to $-8$, it suffices to discharge so that every
vertex, face, and pot ends with nonnegative charge.
We use the following five discharging rules, applied successively.
\begin{enumerate}
\item[(R1)] Each $15^+$-vertex $v$ with a 2-neighbor gives 1 to the pot for the
component of $G_2$ containing $v$, and each 2-vertex takes 1 from the pot for
its component of $G_2$.
\item[(R2)] Each $5^+$-vertex $v$ splits its remaining charge
equally among all incident faces (with multiplicity, if $v$ lies on the face
boundary multiple times).
\item[(R3)] Each 3-vertex takes 1/3 from each incident face (with multiplicity, if
applicable).
\item[(R4)] Each 2-vertex takes 1/3 from each incident 3-face, 2/3 from each incident
4-face with boundary that induces a 2-alternating cycle, and 1 from each other
incident $4^+$-face (with multiplicity, if applicable).
\item[(R5)] After applying all rules above, any positive charge at a 2-vertex is returned to
the pot for its component of $G_2$.
\end{enumerate}

Now we show that each vertex, face, and pot finishes with nonnegative charge.
Note that each $k$-vertex, with $k\ge 5$ gives each incident face at least $(k-4)/k$ or
$(k-5)/k$, the former unless $k\ge 15$.  Thus, each face $f$ receives at least
$1/3$ from each incident $6^+$-vertex and at least $2/3$ from each incident
$12^+$-vertex.

Each 2-vertex $v$ finishes with charge at least $2-4+1+(1/3)+(2/3)=0$, since $G$ is
simple, which implies that $v$ lies on at least one $4^+$-face.
Each 3-vertex $v$ finishes with charge $3-4+3(1/3)=0$.
Each 4-vertex finishes with charge $4-4=0$.
Each $5^+$-vertex finishes with charge 0, by (R2).

Let $f$ be a $4^+$-face.  Let $s$ denote the smallest degree of any vertex
incident to $f$.  If $s\ge 4$, then $f$ starts nonnegative, receives charge
from incident vertices, and does not give away charge, so ends positive.  If
$s=3$, then each incident 3-vertex is preceded along $f$ by a $14^+$-neighbor
(in some consistent direction); each such vertex pair gives $f$ a net charge of
at least $2/3-1/3$; so $f$ ends with positive charge.  Now assume that $s=2$. 
If $f$ is a 4-face and its boundary is a 2-alternating cycle, then $f$ finishes
with at least $4-4+2(2/3)-2(2/3)=0$.  Suppose instead that $f$ is a 4-face, but
its boundary does not induce a 2-alternating cycle.  The vertices incident to
$f$ include a single 2-vertex and at least two $15^+$-vertices.  Thus, $f$
finishes with at least $4-4+2(2/3)-1-1/3=0$.  Finally, suppose $f$ is a
$5^+$-face.  Now each incident $3^-$-vertex is preceded along $f$ (in some consistent
direction) by a $14^+$-vertex, and the net charge given to these two vertices
is at most 1/3.  Thus, $f$ finishes with at least $\ell(f)-4-\lfloor
\ell(f)/2\rfloor/3 \ge 5\ell(f)/6-4>0$.

Let $f$ be a 3-face, and let $s$ denote the smallest degree of any vertex
incident to $f$.  If $s\ge 6$, then each incident vertex gives $f$ at least
$1/3$, so $f$ finishes with at least $3-4+3(1/3)=0$.  So suppose that $s\le 5$.
 Now $f$ has a single incident $5^-$-vertex, which receives at most $1/3$ from
$f$.  Each other incident vertex gives $f$ at least $2/3$, so $f$ ends with at
least $3-4+2(2/3)-1/3=0$.

Finally, we consider the pot for a component $H$ of $G_2$.  If $H$ has at
least as many $\Delta$-vertices as 2-vertices, then the pot is nonnegative, by
(R1).  So assume this is not the case.  Thus, $H$ contains at least two cycles.
If two cycles in $H$ intersect in at most one vertex, then (C2) in the
statement of the lemma
holds (take a minimal connected subgraph containing the cycles).  So assume
that every pair of cycles in $H$ intersect in at least two vertices. 
Now let $H'$ be the union of all cycles in $H$. 
Since $H'$ is 2-connected, we consider an ear decomposition.
If $H'$ contains a cycle of any length other than 4, then we consider a
$\theta$-graph consisting of that cycle and the next ear in an ear
decomposition.  Now $G$ contains a $\theta$-graph other than $K_{2,3}$, so (C3)
holds.  So assume that $H'=K_{2,t}$ for some $t\ge 3$ (the analysis is similar
to the previous proof, but easier).  If $t\ge 4$, then (C3)
holds.  So assume that $H'=K_{2,3}$.
This implies that $H$ is formed from $K_{2,3}$ by adding one or more pendant
(even-length) paths at its $\Delta$-vertices.  Denote the vertex set of $H'$ by
$\{x_1,x_2, y_1,y_2,y_3\}$ where each $x_i$ is a $\Delta$-vertex in $G$ and
$d_{H'}(x_i)=3$ and $d_H(y_j)=2$ for all $i,j$.  

We consider the number of incidences $(y_i,f)$, where $i\in[3]$ and ($f$ ranges
over all faces).  Let $a_1$, $a_2$, $a_3$ denote the numbers of these
incidences, respectively, where $f$ is a 3-face, where $f$ is a 4-face with
boundary inducing a 2-alternating cycle, and where $f$ is another $4^+$-face.
If $(y_i,f)$ is counted by $a_1$, $a_2$, or $a_3$, then $f$ sends $y_i$ a charge
of $1/3$, $2/3$, or $3/3$ (respectively), by (R4).  We claim that $a_1\le a_3$.  This
implies that the total charge received by the $y_i$'s from their incident faces
is at least $6(2/3)=4$.  So the net charge that the $y_i$'s receive from their
pot is at most 2, as desired.

Now we prove that $a_1\le a_3$, as claimed.
Note that each face $f$ incident to some $y_i$ contains as part of its boundary
the path $x_1y_ix_2$.  If this boundary contains $x_1x_2$, then $f$ is a 3-face;
otherwise, $f$ is a $4^+$-face.  Since $G$ is simple, at most two of these
incident faces are 3-faces; that is, $a_1\le 2$.  If $a_1=0$, then we are done.
So assume that $x_1x_2\in E(G)$.
Consider the three 4-faces of $H'$, with boundaries $x_1y_ix_2y_j$ for each
pair of distinct $i,j\in[3]$.  The edge $x_1x_2$ lies inside one of these,
forming the two 3-cycles $x_1y_ix_2$ and $x_1y_jx_2$.  If both of these 3-cycles
are 3-faces of $G$, then the remaining edges incident to $x_1$ and $x_2$ must
each lie inside another of these 4-cycles.  So two of the $y_i$'s each
lie on some $4^+$-face that does not induce a 2-alternating cycle.  This is the
case that $a_1=2$ and $a_3\ge 2$.
Suppose instead that $a_1=1$.  Now again, the remaining edges incident to $x_1$
and $x_2$ must appear inside one of these 4-cycles, which implies that $a_3\ge
1$.  So in every case we have $a_1\le a_3$, as desired.
This proves the claim and finishes the lemma.
\end{proof}

\section*{Appendix: Proof of Lemma~1}
A key ingredient in the proofs of both Main Theorem~1 and Main Theorem~2 is
Lemma~\ref{CM-lem}, which was proved in~\cite{CM}.  However, since that
manuscript is not yet published (or refereed), for completeness we include a
proof below.  This proof is essentially copied from~\cite{CM}.  
First, we restate Lemma~1.

\setcounter{lemi}{0}
\begin{NoHyper}
\begin{lemi}
Fix a graph $G$ and a function $f:V(G)\to \Z^+$.  Let $H$ be an induced subgraph
of $G$ such that $G-H$ is $f$-swappable.  Let $f'(x):=f(x)-(d_G(x)-d_H(x))$ for
all $x\in V(H)$.  If $f'(x)\ge d_H(x)$ for all $x\in V(H)$ and $H$ is
both $f'$-choosable and $f'$-swappable, then $G$ is $f$-swappable.
\end{lemi}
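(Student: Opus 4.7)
The plan is to follow the template of Lemma~\ref{degen-lem} (the single-vertex case $H=\{v\}$), lifted to the induced subgraph $H$. Given $L$-colorings $\varphi_1,\varphi_2$ of $G$, first restrict them to $G-H$. By the $f$-swappability of $G-H$, there is a sequence of $L$-valid Kempe swaps in $G-H$ taking $\varphi_1|_{G-H}$ to $\varphi_2|_{G-H}$; call the intermediate colorings $\psi'_0,\psi'_1,\ldots,\psi'_t$. My aim is to build, by induction on $i$, $L$-colorings $\psi_0,\ldots,\psi_t$ of $G$ with $\psi_i|_{G-H}=\psi'_i$, $\psi_0=\varphi_1$, and each pair $\psi_{i-1},\psi_i$ connected by $L$-valid Kempe swaps in $G$. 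A final ``fiber step'' then connects $\psi_t$ to $\varphi_2$, since they agree on $G-H$.

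The fiber step (call it Step~A) says that any two $L$-colorings $\psi,\psi'$ of $G$ agreeing on $G-H$ are $L$-equivalent in $G$ via swaps that stay inside $H$. I would prove Step~A by introducing, for each $x\in V(H)$, the effective list $A(x):=L(x)\setminus\{\psi(y):y\in N_G(x)\setminus V(H)\}$, which satisfies $|A(x)|\ge f'(x)$, and invoking the $f'$-swappability of $H$ on a sublist of $A(x)$ of size exactly $f'(x)$ containing both $\psi(x)$ and $\psi'(x)$. Every $A$-valid Kempe swap inside $H$ automatically lifts to an $L$-valid Kempe swap in $G$, because $A$ already forbids the colors used on the $G-H$ boundary.

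The inductive lifting step is the core of the argument. Write the move $\psi'_{i-1}\to\psi'_i$ as an $\alpha,\beta$-swap on a component $C$ of the $\alpha,\beta$-subgraph of $G-H$. If performing the same swap in $G$ is $L$-valid, I am done; otherwise, some $x\in V(H)$ adjacent to $C$ has a color in $\{\alpha,\beta\}$ that collides post-swap. To repair, I would first detour through an $L$-equivalent $\tilde{\psi}_{i-1}$ (agreeing with $\psi_{i-1}$ on $G-H$) whose restriction to $H$ avoids the bad colors, and then perform the swap. Existence of $\tilde{\psi}_{i-1}$ reduces to finding a proper $L''$-coloring of $H$, where $L''(x)$ removes from $L(x)$ both the colors of $x$'s external neighbors and the post-swap colors appearing among $x$'s neighbors in $C$; reachability of $\tilde{\psi}_{i-1}$ from $\psi_{i-1}$ is then Step~A.

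The main obstacle is that $|L''(x)|$ can drop to $f'(x)-1$ when $x$ is adjacent to $\alpha$-colored (say) but not $\beta$-colored vertices of $C$, putting $L''$ just below the threshold that $f'$-choosability of $H$ guarantees. This mirrors the subtle point in the proof of Lemma~\ref{degen-lem}, where one must locate a ``spare'' color unused on the closed neighborhood of $v$. I anticipate resolving the deficit using the hypothesis $f'(x)\ge d_H(x)$ together with structural slack: in the bad case, either $|S_x|<d_G(x)-d_H(x)$ (two external neighbors collide and we gain back a color) or a preliminary Kempe swap inside $H$ can reorganize $x$'s neighborhood before the main swap and be undone afterwards. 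Managing this interplay between $f'$-choosability (for existence of $\tilde{\psi}_{i-1}$) and $f'$-swappability (for reachability of $\tilde{\psi}_{i-1}$) under the tight bound $f'\ge d_H$ is the heart of the proof.
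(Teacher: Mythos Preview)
Your overall architecture matches the paper's: restrict to $G-H$, take the swap sequence there, lift each step to $G$, and finish with a fiber step using the $f'$-swappability of $H$.  Your Step~A is exactly right and appears verbatim in the paper's proof.

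The gap is in your lifting step.  You try to find an extension $\tilde\psi_{i-1}$ of $\psi'_{i-1}$ to $H$ from lists $L''(x)$ that forbid both the external-neighbor colors \emph{and} the dangerous color among $\{\alpha,\beta\}$.  As you already see, this forces $|L''(x)|$ down to $f'(x)-1$ at the critical vertices, and neither of your proposed rescues works in general: there is no reason two external neighbors of $x$ share a color, and ``reorganizing $x$'s neighborhood by a preliminary swap inside $H$'' cannot change what colors the \emph{external} neighbors of $x$ wear, which is where the deficit comes from.  So the existence of $\tilde\psi_{i-1}$ is genuinely unproved.

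The paper resolves this with a different idea (its Lemma~\ref{versatile-lem}): it does \emph{not} try to keep $H$ off of $\{\alpha,\beta\}$.  Instead it builds an extension $\widetilde{\vph_i}$ of $\vph'_i$ that is \emph{$\alpha,\beta$-versatile}: a vertex $x\in V(H)$ is allowed to take a color in $\{\alpha,\beta\}$, but only when (i) $\{\alpha,\beta\}\subseteq L(x)$ whenever $x$ has an external $\{\alpha,\beta\}$-colored neighbor, and (ii) $x$ has at most one such external neighbor.  This guarantees the $\alpha,\beta$-swap at $v_i$ is $L$-valid in $G$ and that each $\alpha,\beta$-component of the extension contains at most one $\alpha,\beta$-component of $\vph'_i$, so the swap restricts to $\vph'_{i+1}$ on $G-H$.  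Proving such an extension exists uses that $H$, being $f'$-choosable with $f'=d_H$, is not a Gallai tree and hence contains an induced even cycle with at most one chord; the extension is built greedily toward that cycle and then completed on the cycle by a short case analysis.  This ``versatility'' lemma is the missing ingredient in your plan.
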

\end{NoHyper}

\begin{defn}
For a graph $G$ and a list assignment $L$ for $G$, an $L$-coloring $\vph$ of $G$
is \emph{$\alpha,\beta$-versatile at $w$} if $\vph(w)\in\{\alpha,\beta\}$ and
an $\alpha,\beta$-swap at $w$ is $L$-valid for $\vph$.
\end{defn}

\begin{lem}
\label{versatile-lem}
Fix a graph $G$, a connected subgraph $H$, and a list assignment $L$ for $G$.
Let $\vph'$ be an $L$-coloring for $G-H$ that is $\alpha,\beta$-versatile at a
vertex $w$.  If $|L(v)|\ge d_G(v)$ for all $v\in V(H)$, and $H$ is not a Gallai
tree, then there exists an $L$-coloring $\vph$ of $G$ that extends $\vph'$ such
that $\vph$ is $\alpha,\beta$-versatile at $w$.  Further, there exists such an
$L$-coloring $\vph$ with the property that each $\alpha,\beta$-component of
$\vph$ contains the vertex set of at most one $\alpha,\beta$-component of
$\vph'$.
\end{lem}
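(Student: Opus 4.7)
The plan is to extend $\vph'$ to an $L$-coloring of $G$ by choosing an appropriate list-coloring of $H$ and then invoking Erd\H{o}s--Rubin--Taylor, using that $H$ is not a Gallai tree. For $v \in V(H)$, I would first define the reduced list $L'(v) := L(v) \setminus \{\vph'(u) : u \in N_G(v) \setminus V(H)\}$; since $|L(v)| \ge d_G(v)$ and at most $d_G(v) - d_H(v)$ colors are removed, $|L'(v)| \ge d_H(v)$. An $L'$-coloring of $H$ then exists, giving some extension of $\vph'$ to an $L$-coloring of $G$; the real task is to pick one that is $\alpha,\beta$-versatile at $w$ and satisfies the component-containment clause.

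Let $C$ denote the $\alpha,\beta$-component of $w$ in $\vph'$, so $\{\alpha,\beta\} \subseteq L(u)$ for every $u \in C$. Call $v \in V(H)$ \emph{bad} if $\{\alpha,\beta\} \not\subseteq L(v)$ and \emph{boundary} if $v$ has a neighbor in $C$. To protect versatility at $w$, I would refine the list to $L''(v) := L'(v) \setminus \{\alpha,\beta\}$ at each bad boundary vertex $v$, keeping $L''(v) := L'(v)$ elsewhere. A short case analysis verifies $|L''(v)| \ge d_H(v)$ at a bad boundary $v$: the color $\vph'$ assigns to $v$'s $C$-neighbor is already removed from $L'(v)$ when that color lies in $L(v)$; since $v$ is bad, $|\{\alpha,\beta\} \cap L(v)| \le 1$; and when the $C$-neighbor's color lies outside $L(v)$, there is an extra unit of slack in $|L(v)| - |A'(v)|$ (with $A'(v) := \{\vph'(u) : u \in N_G(v) \setminus V(H)\} \cap L(v)$) that absorbs the further removal. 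Apply Erd\H{o}s--Rubin--Taylor to $L''$ on $H$ to obtain a coloring $\psi$, and set $\vph := \vph' \cup \psi$.

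Any vertex of $V(H)$ in the $\alpha,\beta$-component of $w$ under $\vph$ is reached from $w$ by an $\alpha,\beta$-colored path that first crosses the $G-H$-to-$H$ cut at some boundary vertex $v_1$; by construction $v_1$ is not bad, so $\{\alpha,\beta\} \subseteq L(v_1)$. Propagating this along the path yields versatility, and the same restriction prevents any $\alpha,\beta$-path in $\vph$ from both exiting and re-entering $G-H$, giving the component-containment claim.

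The main obstacle is the propagation: an interior bad vertex of $V(H)$ might still be colored $\alpha$ or $\beta$ and lie on an $\alpha,\beta$-path joining $w$ to some other bad vertex through good boundary vertices. The clean fix of blocking $\{\alpha,\beta\}$ at every bad vertex (boundary and interior) would resolve versatility, but at an interior bad vertex $v$ with no $G-H$-neighbor and $|L(v)| = d_H(v)$ it can cost a color of slack and break the Erd\H{o}s--Rubin--Taylor hypothesis. To close this gap I would either iterate the restriction outward---forbidding $\{\alpha,\beta\}$ at a bad $v$ only once some neighbor of $v$ in $H$ has been forced to a color in $\{\alpha,\beta\}$---or exploit the 2-connected block of $H$ guaranteed by the non-Gallai-tree hypothesis (which is neither a clique nor an odd cycle), using an Erd\H{o}s--Rubin--Taylor-style kernel swap to locally re-color and break any offending $\alpha,\beta$-path. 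Working out that propagation or local correction is the main technical task.
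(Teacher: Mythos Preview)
Your static approach---set up lists on $H$ once, then invoke Erd\H{o}s--Rubin--Taylor globally---has two genuine gaps. The first you name: an interior bad vertex may receive a color in $\{\alpha,\beta\}$ and lie on an $\alpha,\beta$-path to $w$, breaking versatility. The second you miss entirely: the component-containment clause fails even at \emph{good} vertices. If $v\in V(H)$ has $\{\alpha,\beta\}\subseteq L(v)$ and two neighbors in $G-H$ lying in distinct $\alpha,\beta$-components of $\vph'$, your $L''(v)=L'(v)$ imposes no restriction, so $\psi(v)\in\{\alpha,\beta\}$ can merge those components. Your sentence that ``the same restriction prevents any $\alpha,\beta$-path in $\vph$ from both exiting and re-entering $G-H$'' is therefore unjustified.

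The paper avoids both problems by abandoning the static strategy. It extends $\vph'$ one vertex at a time, treating each newly colored vertex as outside the current $H$. For the vertex $x$ being colored, it removes from $L(x)$: (i) colors on already-colored neighbors; (ii) $\beta$, if some colored neighbor uses $\alpha$ but $\alpha\notin L(x)$ (and symmetrically); and (iii) $\beta$, if at least two colored neighbors use $\alpha$ (and symmetrically). Rule~(ii) preserves versatility at each step; rule~(iii) ensures $x$ never becomes a merge point of two $\alpha,\beta$-components. A short count shows the reduced list still has size at least $d_H(x)\ge 1$, so $x$ can always be colored while it retains an uncolored $H$-neighbor. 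Vertices are processed in order of non-increasing distance from a fixed induced even cycle $C$ with at most one chord (guaranteed since $H$ is not a Gallai tree), and $C$ is colored last by a short case analysis. Your ``iterate the restriction outward'' fix is heading toward this, but the missing ingredients are the merging rule~(iii) and the use of a terminal cycle $C$ in place of a single global ERT call.
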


\begin{proof}
Since $H$ is not a Gallai tree, it contains an induced even cycle, $C$,
with at most one chord.  We first show how to extend $\vph'$ to $G-C$, and then
how to extend it to all of $G$.

A key step is to show that if $|V(H)|\ge 2$ and $x\in V(H)$, then there exists
an $L$-coloring $\vph$ of $G-(H-x)$ that extends $\vph'$ to $x$ and that is
$\alpha,\beta$-versatile at $w$.  Suppose that $|V(H)|\ge 2$ and fix $x\in
V(H)$.  When choosing a color for $x$ (for brevity, we denote it by
$\vph'(x)$), to ensure that the resulting extension of $\vph'$ is an
$L$-coloring that is $\alpha,\beta$-versatile at $w$, we need to check the
following properties: (a) $\vph'(x)\ne \vph'(y)$ for all $y\in N(x)\setminus
V(H)$, (b) if $\vph'(x)\in\{\alpha,\beta\}$ and $x$ has a neighbor $y\notin H$
with $\vph'(y)\in\{\alpha,\beta\}$, then $\{\alpha,\beta\}\subseteq L(x)$, and
(c) if $\vph'(x)\in\{\alpha,\beta\}$, then $x$ has at most one neighbor
$y\notin H$ with $\vph'(y)\in\{\alpha,\beta\}$.  Now (a)
ensures the extension is a proper $L$-coloring; (b) ensures that an
$\alpha,\beta$-swap at $w$ will not create a problem at $x$; and (c)
ensures that each $\alpha,\beta$-component of $\vph$ contains at most one
$\alpha,\beta$-component of $\vph'$ and that an $\alpha,\beta$-swap at $x$
will not create a problem at $y$.

We form a list assignment $L'(x)$ from $L(x)$ by first removing each color that
is used by $\vph'$ on a neighbor of $x$.  Further, if $\alpha$ is used on a
neighbor of $x$ and $\alpha\notin L(x)$, then we remove $\beta$ from $L(x)$. 
Similarly, if $\alpha$ is used on two neighbors of $x$, then we remove
$\beta$ from $L(x)$, regardless of whether or not $\alpha\in L(x)$.  We also 
remove $\alpha$ from $L(x)$ if either of these situations occurs, but with
$\beta$ and $\alpha$ interchanged.  Since $|L(x)|\ge d_G(x)$, we must have
$|L'(x)|\ge d_H(x)\ge 1$, because $H$ is connected and $|V(H)|\ge 2$.  To extend
$\vph'$ to $x$, we simply choose any color in $L'(x)$.  This completes the key
step, started in the previous paragraph.  To extend $\vph'$ to $G-C$, we
repeatedly apply the key step, coloring vertices in order of non-increasing
distance from $C$.  Now we show how to extend $\vph'$ to $C$.  

First suppose that $C$ has no chord.  For each $v\in V(C)$, form $L'(v)$ as
in the previous paragraph.  Again, $|L'(v)|\ge 2$ for all $v\in V(C)$. 
First suppose there exists $\gamma$ and $x,y\in V(C)$ such that
$\gamma\notin\{\alpha,\beta\}$ and $xy\in E(C)$ and $\gamma\in L'(x)\setminus
L'(y)$.  Now we color $x$ with $\gamma$ and proceed around $C$ finishing with
$y$.  This process succeeds because each time we color another vertex $z_1$ we reduce
the number of allowable colors on its uncolored neighbor $z_2$ by at most one (even if
we completely repeat the process of removing colors as in the previous
paragraph, treating $z_1$ as though it is outside $H$).  We can finish at $y$
because the color on $x$ does not restrict our choice of color for $y$.
Suppose instead that there exists $\gamma\notin\{\alpha,\beta\}$ such that
$\gamma\in L'(v)$ for all $v\in C$.  Now we use $\gamma$ on one maximum
independent set in $C$ and color each remaining vertex $v$ from
$L'(v)\setminus\{\gamma\}$.  Finally, suppose that $L'(v)=\{\alpha,\beta\}$ for
each $v\in V(C)$.  Now we alternate $\alpha$ and $\beta$ around $C$.
In each case, it is easy to check that the resulting coloring $\vph$ is
$\alpha,\beta$-versatile at $w$.

Now suppose that $C$ has a chord.  Let $x$ denote one endpoint of the chord and
let $y$ and $z$ denote the neighbors of $x$ on $C$, besides the other endpoint
of the chord.  Form $L'(v)$ for each $v\in V(C)$, as above; again $|L'(v)|\ge
d_H(v)$ for all $v\in V(C)$.  Since $|L'(x)|\ge d_H(x)=3$, there exists
$\gamma\in L'(x)\setminus\{\alpha,\beta\}$.  If $\gamma\in L'(y)\cap L'(z)$,
then use $\gamma$ on $y$ and $z$ and color greedily toward $x$ in the remaining
uncolored subgraph.  So suppose instead that $\gamma\notin L'(y)\cap L'(z)$; by
symmetry, assume that $\gamma\notin L'(z)$.  Now use $\gamma$ on $x$, then color
the remaining uncolored subgraph greedily in order of non-increasing distance from
$z$.  Again, we can finish at $z$ because using $\gamma$ on $x$ does not
restrict the choice of color for $z$.
\end{proof}

Now we prove Lemma~\ref{CM-lem}.  As mentioned above, the proof mirrors that of
Lemma~\ref{degen-lem}, but now Lemma~\ref{versatile-lem} ensures that some
extension to $H$ is versatile for the next Kempe swap in $G-H$.

\begin{proof}[Proof of Lemma~\ref{CM-lem}.]
Fix $G$, $H$, $f$, and $f'$ that satisfy the hypotheses of
Lemma~\ref{CM-lem}.  Assume that $H$ is $f'$-choosable and
$f'$-swappable.  If $f'(w)>d_H(w)$ for some $w\in V(H)$, then
the proof is nearly the same as that of Corollary~\ref{degen-cor1}.
We delete the vertices of $H$ in order of non-decreasing distance from $w$.
Finally, we can handle all of $G-H$ at once because, by hypothesis, $G-H$ is
$f$-swappable.  So instead we assume that $f'(x)=d_H(x)$ for all $x\in V(H)$.
By assumption, $H$ is $f'$-choosable; this implies that $H$ is not a Gallai tree
(since all Gallai trees fail to be degree-choosable, as is easily shown by
induction on the number of blocks).  So now we will be able to apply
Lemma~\ref{versatile-lem} to $H$.

Let $L$ be an $f$-assignment for $G$.  Let $G':=G-H$.  By assumption, $G'$ is
$L$-swappable.  Let $\vph_0$ and $\vph$ be two $L$-colorings of $G$, and let
$\vph_0'$ and $\vph'$ denote their restrictions to $G'$.  Since $G'$ is
$L$-swappable, there exists a sequence $\vph_0', \vph_1', \ldots, \vph_k'$ of
$L$-colorings of $G'$, with $\vph_k'=\vph'$, such that every two successive
$L$-colorings differ by a single $L$-valid Kempe swap.  
By induction on $k$, we extend each
$\vph_i'$ to an $L$-coloring $\vph_i$ of $G$ such that every two successive
$L$-colorings in
the sequence $\vph_0,\vph_1,\ldots,\vph_k$ are $L$-equivalent. 

The case $k=0$ is easy, as follows. Form $L'$ from $L$ by removing, for each
$w\in H$, from $L(w)$ each color used by $\vph_0'$ on a neighbor of $w$ in
$G-H$.  Note that $|L'(w)|\ge f'(w)$ for all $w\in H$.  
So $H$ has an $L'$-coloring, since $H$ is $f'$-choosable.
So assume that $k\ge 1$, and fix $i$ such that $0\le i\le k-1$.
Suppose that $\vph_{i+1}'$ differs from $\vph_i'$ by an $\alpha,\beta$-swap at a
vertex $v_i$.  By Lemma~\ref{versatile-lem}, there exists an $L$-coloring
$\widetilde{\vph_i}$ of $G$, such that the restriction of $\widetilde{\vph_i}$
to $G'$ is $\vph'_i$ and an $\alpha,\beta$-swap at $v_i$ is $L$-valid in 
$\widetilde{\vph_i}$.  Furthermore, the restriction of this new coloring (after
performing the $\alpha,\beta$-swap at $v_i$) is $\vph_{i+1}'$.
It now suffices to show that $\vph_i$ and $\widetilde{\vph_i}$ are
$L$-equivalent. We do this by a sequence of Kempe swaps that recolors $H$ but
never changes the colors on $V(G-H)$.

For each $v\in V(G-H)$, remove $\vph'_{i}(v)$ from $L(w)$ for each $w\in
N(v)\cap V(H)$; denote the resulting list assignment on $H$ by $L_H$.  Note
that $|L_H(x)|\ge f'(x)\ge d_H(x)$ for all $x\in V(H)$.  If $|L_H(x)|>f'(x)$
for some $x$, then $H$ is $L_H$-swappable by Corollary~\ref{degen-cor1}, since
$|L_H(y)|\ge f'(y)\ge d_H(y)$ for all $y\in H$.  So assume $|L_H(y)|=f'(y)$ for
all $y\in H$.  Since $H$ is
$f'$-swappable, the restrictions of $\vph_i$ and $\widetilde{\vph_i}$ to $H$
(which are both $L_H$-colorings) are $L_H$-equivalent.  Consider a sequence of
Kempe swaps that witnesses this.  Note that performing the same Kempe swaps in
$G$ transforms $\vph_i$ to $\widetilde{\vph_i}$ (this is because for each edge
$vw$ with $v\in V(H)$ and $w\notin V(H)$, we have $\vph'_i(w)\notin L_H(v)$).
 Now performing an $\alpha,\beta$-swap at $v_i$ in $\widetilde{\vph_i}$ yields
an $L$-coloring of $G$ that restricts to $\vph'_{i+1}$ on $H$; we denote
this $L$-coloring of $G$ by $\vph_{i+1}$.

The previous two paragraphs show that we can use $L$-valid Kempe swaps to
transform $\vph_0$ into an $L$-coloring $\widetilde{\vph}$ that agrees with
$\vph$ on $G-H$.  Finally, we transform $\widetilde{\vph}$ to $\vph$.
This is possible precisely because $H$ is $f'$-swappable.
\end{proof}

\section*{Acknowledgment} Thanks to Reem Mahmoud and two referees for helpful feedback on 
earlier versions of this paper.

\bibliographystyle{habbrv}
{\small{\bibliography{edge-swappable}}}

\end{document}